\newcommand{\cJ}{{\cal J}}
\newcommand{\cM}{{\cal M}}
\newcommand{\cO}{{\cal O}}
\newcommand{\cR}{{\cal R}}
\begin{document}

\def\Z{\mathbb{Z}}                   
\def\Q{\mathbb{Q}}                   
\def\C{\mathbb{C}}                   
\def\N{\mathbb{N}}                   
\def\uhp{{\mathbb H}}                
\def\A{\mathbb{A}}                   
\def\dR{{\rm dR}}                    
\def\F{{\cal F}}                     
\def\Sp{{\rm Sp}}                    
\def\Gm{\mathbb{G}_m}                 
\def\Ga{\mathbb{G}_a}                 
\def\Tr{{\rm Tr}}                      
\def\tr{{{\mathsf t}{\mathsf r}}}                 
\def\spec{{\rm Spec}}            
\def\ker{{\rm ker}}              
\def\GL{{\rm GL}}                
\def\k{{\sf k}}                     
\def\ring{{\sf R}}                   
\def\X{{\sf X}}                      
\def\T{{\sf T}}                      
\def\Ts{{\sf S}}
\def\cmv{{\sf M}}                    
\def\BG{{\sf G}}                       
\def\podu{{\sf pd}}                   
\def\ped{{\sf U}}                    
\def\per{{\sf  P}}                   
\def\gm{{\sf  A}}                    
\def\gma{{\sf  B}}                   
\def\ben{{\sf b}}                    

\def\Rav{{\mathfrak M }}                     
\def\Ram{{\mathfrak C}}                     
\def\Rap{{\mathfrak G}}                     

\def\nov{{\sf  n}}                    
\def\mov{{\sf  m}}                    
\def\Yuk{{\sf C}}                     
\def\Ra{{\sf R}}                      
\def\hn{{\sf h}}                      
\def\cpe{{\sf C}}                     
\def\g{{\sf g}}                       
\def\t{{\sf t}}                       
\def\pedo{{\sf  \Pi}}                  

\def\Der{{\rm Der}}                   
\def\MMF{{\sf MF}}                    
\def\codim{{\rm codim}}                
\def\dim{{\rm    dim}}                
\def\Lie{{\rm Lie}}                   
\def\gg{{\mathfrak g}}                

\def\u{{\sf u}}                       

\def\imh{{  \Psi}}                 
\def\imc{{  \Phi }}                  
\def\stab{{\rm Stab }}               
\def\Vec{{\rm Vec}}                 
\def\prim{{\rm prim}}                  

\def\Fg{{\sf F}}     
\def\hol{{\rm hol}}  
\def\non{{\rm non}}  
\def\alg{{\rm alg}}  

\def\bcov{{\rm \O_\T}}       

\def\leaves{{\cal L}}        

\def\GM{{\rm GM}}

\def\perr{{\sf q}}        
\def\perdo{{\cal K}}   
\def\sfl{{\mathrm F}} 
\def\sp{{\mathbb S}}  

\newcommand\diff[1]{\frac{d #1}{dz}} 
\def\End{{\rm End}}              

\def\sing{{\rm Sing}}            
\def\cha{{\rm char}}             
\def\Gal{{\rm Gal}}              
\def\jacob{{\rm jacob}}          
\def\tjurina{{\rm tjurina}}      
\newcommand\Pn[1]{\mathbb{P}^{#1}}   
\def\Ff{\mathbb{F}}                  

\def\O{{\cal O}}                     
\def\as{\mathbb{U}}                  
\def\ring{{\mathsf R}}                         
\def\R{\mathbb{R}}                   

\newcommand\ep[1]{e^{\frac{2\pi i}{#1}}}
\newcommand\HH[2]{H^{#2}(#1)}        
\def\Mat{{\rm Mat}}              
\newcommand{\mat}[4]{
     \begin{pmatrix}
            #1 & #2 \\
            #3 & #4
       \end{pmatrix}
    }                                
\newcommand{\matt}[2]{
     \begin{pmatrix}                 
            #1   \\
            #2
       \end{pmatrix}
    }
\def\cl{{\rm cl}}                

\def\hc{{\mathsf H}}                 
\def\Hb{{\cal H}}                    
\def\pese{{\sf P}}                  

\def\PP{\tilde{\cal P}}              
\def\K{{\mathbb K}}                  

\def\M{{\cal M}}
\def\RR{{\cal R}}
\newcommand\Hi[1]{\mathbb{P}^{#1}_\infty}
\def\pt{\mathbb{C}[t]}               
\def\W{{\cal W}}                     
\def\gr{{\rm Gr}}                
\def\Im{{\rm Im}}                
\def\Re{{\rm Re}}                
\def\depth{{\rm depth}}
\newcommand\SL[2]{{\rm SL}(#1, #2)}    
\newcommand\PSL[2]{{\rm PSL}(#1, #2)}  
\def\Resi{{\rm Resi}}              

\def\L{{\cal L}}                     
\def\Aut{{\rm Aut}}              
\def\any{R}                          
\newcommand\ovl[1]{\overline{#1}}    

\newcommand\mf[2]{{M}^{#1}_{#2}}     
\newcommand\mfn[2]{{\tilde M}^{#1}_{#2}}     

\newcommand\bn[2]{\binom{#1}{#2}}    
\def\ja{{\rm j}}                 
\def\Sc{\mathsf{S}}                  
\newcommand\es[1]{g_{#1}}            
\newcommand\V{{\mathsf V}}           
\newcommand\WW{{\mathsf W}}          
\newcommand\Ss{{\cal O}}             
\def\rank{{\rm rank}}                
\def\Dif{{\cal D}}                   
\def\gcd{{\rm gcd}}                  
\def\zedi{{\rm ZD}}                  
\def\BM{{\mathsf H}}                 
\def\plf{{\sf pl}}                             
\def\sgn{{\rm sgn}}                      
\def\diag{{\rm diag}}                   
\def\hodge{{\rm Hodge}}
\def\HF{{\sf F}}                                
\def\WF{{\sf W}}                               
\def\HV{{\sf HV}}                                
\def\pol{{\rm pole}}                               
\def\bafi{{\sf r}}
\def\id{{\rm id}}                               
\def\gms{{\sf M}}                           
\def\Iso{{\rm Iso}}                           

\def\hl{{\rm L}}    
\def\imF{{\rm F}}
\def\imG{{\rm G}}

\newtheorem{theo}{Theorem}
\newtheorem{exam}{Example}
\newtheorem{coro}{Corollary}
\newtheorem{defi}{Definition}
\newtheorem{prob}{Problem}
\newtheorem{lemm}{Lemma}
\newtheorem{prop}{Proposition}
\newtheorem{rem}{Remark}
\newtheorem{conj}{Conjecture}
\newtheorem{calc}{}

\begin{center}
{\LARGE\bf 
Calabi-Yau modular forms in limit: Elliptic Fibrations
\footnote{ 
Math. classification:  14N35, 14J15, 32G20
\\
Keywords: modular forms, Hodge filtration, Picard-Fuchs system}}
\\
\vspace{.25in} {\large {\sc Babak Haghighat, Hossein Movasati, Shing-Tung Yau}}
\\

\end{center}

\begin{abstract}
We study the limit of Calabi-Yau modular forms, and in particular, those resulting in classical modular forms. We then study two parameter families of elliptically fibred Calabi-Yau fourfolds and describe the modular forms  arising from the degeneracy loci. In the case of  elliptically fibred Calabi-Yau threefolds our approach gives a mathematical proof of many observations about modularity properties of topological string amplitudes starting with the work of Candelas, Font, Katz and Morrison. In the case of Calabi-Yau fourfolds we derive new identities not computed before. 
\end{abstract}

\def\CYM{{\sf M}}
\def\PF{{\cal L}}
\def\en{{n}}

\section{Introduction}
Theoretical Physics and in particular string theory has provided mathematicians with many $q$-expansions
which at first glance look like modular forms. This is actually the case for some examples of such $q$-expansions, however, 
in general they transcend the world of modular and automorphic forms. The case of the mirror quintic is the most well-known one, 
and it is argued in \cite{ho22, GMCD-MQCY3, HosseinMurad} that there is a parallel modular form theory in this case. These are called 
Calabi-Yau modular forms.  In this paper we gather further evidence that Calabi-Yau modular forms are natural generalizations of 
classical automorphic forms.  It is a well-known fact that some automorphic forms are the limit of others. 
We would like to study these phenomena in the context of Calabi-Yau modular forms for the case of elliptically fibred Calabi-Yau 
manifolds. Here, as first observed in \cite{Candelas:1994hw}, the corresponding limit for many examples are modular forms for 
$\SL 2\Z$. This observation has ultimately led to a reformulation of the topological string partition function for this 
class of Calabi-Yau manifolds in terms of meromorphic Jacobi forms which has culminated in the first all-genus results 
for the Gromov-Witten theory of compact versions of these manifolds \cite{Huang:2015sta}. In the case of compact elliptically 
fibred Calabi-Yau fourfolds, which are the focus of the present paper, Gromov-Witten invariants have been computed up to 
genus one \cite{Klemm:2007in} which is the highest non-vanishing genus for fourfolds. However, a reformulation of the 
generating functions for these invariants in terms of classical modular forms is still lacking. One goal of the present paper 
is to remedy this gap by expressing  generating functions for the genus zero Gromov-Witten invariants in terms of 
$\SL 2\Z$ modular forms.
 In the case of non-rigid Calabi-Yau manifolds of dimension $\geq 3$ we do not have 
 an underlying Hermitian symmetric domain and so we have to rephrase our problem in terms of Picard-Fuchs systems. 
 Below we describe the general setting together with the main statement of our results and elaborate on 
 the motivation from Physics and in particular string theory.

\subsection{Main statement}
We start with two parameter families of elliptically fibred Calabi-Yau $n$-folds  $X_z,\ z\in(\C^2,0)$. These are 
constructed in the framework of toric geometry, see \S\ref{toricgeometry}. 
For the construction of the field of Calabi-Yau modular forms, one can skip such geometric considerations, and
one can start with the corresponding Picard-Fuchs system:
\begin{eqnarray}
\label{eq:pfoperator1}
& &L_1:=-\en\cdot \theta_1\theta_2+ \theta_1^2-a_0z_1(\theta_1+a_1)(\theta_1+a_2) =0,  \\  
\label{eq:pfoperator2}
& &L_2:=\theta_2^\en-(-1)^nz_2(\en\cdot\theta_2-\theta_1)(\en\cdot\theta_2-\theta_1+1)\cdots (\en\cdot\theta_2-\theta_1+\en-1) = 0, 
\end{eqnarray}
where $\en, a_0,a_1,a_2$ are parameters of the system. The relevant cases to String Theory are the cases $n=3,4$ and 
$(a_0,a_1,a_2)$ as in the Table \ref{taghiirnadejanam}.
If we define $\L_\en\subset [z_1,z_2,\theta_1,\theta_2]$, with $\theta_i=z_i\partial_{z_i}$, 
to be the differential left ideal  generated by the operators $L_1$ and $L_2$,  then $\PF_{\en}$ anihilates the periods 
of a $(n,0)$-forms $\omega^{(n,0)}$ in $X_{z}$. 
The system $\PF_\en$ has one holomorphic solution $\Pi^0=O(1)$ and logarithmic solutions $\Pi^a=\Pi^0\log(z_a)+O(1),\ \ a=1,2$.  
The field $\CYM_\en$ of  differential Calabi-Yau  modular forms in these  situations is the field extension $\CYM_\en$ of  $\C$ 
generated by
\begin{equation}
\label{2sept2015}
 z_1,z_2,\ \  \theta_1^i\theta_2^j\Pi^0, \ \ \theta_1^i\theta_2^j
 \left(\Pi^0\theta_a\Pi^b-\Pi^b\theta_a\Pi^0\right),\ 
\end{equation}
$$
\ a,b=1,2,\ldots, \hn:=h^{12}(X_z)=2,\ \ i,j\in\N_0.
$$
One talks about the field of Calabi-Yau modular forms 
because constructing a graded algebra in this case, similar to the algebra of modular forms, 
demands a more elaborate analysis which is beyond the scope of this work. 
We refer for a discussion on these issues for the case of the mirror quintic to \cite{ho22, GMCD-MQCY3}. 
The field $\CYM_\en$ is finitely generated, for instance, 
it is shown in \cite{HosseinMurad} that for $\en=3$ one actually needs only $\frac{3\hn^2+7\hn+4}{2}=15$ elements in the
list \eqref{2sept2015} in order to generate $\CYM_\en$. 
The modular expressions of the elements of $\CYM_\en$  are obtained after inserting 
the mirror map $(\tau_1,\tau_2)=({\frac{\Pi^1}{\Pi^0}}, {\frac{\Pi^2}{\Pi^0}} )$ or using the
$(q_1,q_2)=(e^{\tau_1},e^{\tau_2})$ coordinates. 
From now on we will use the same name for an element $f(x)$ of $\CYM_\en$ when working with different coordinate systems $x=(z_1,z_2),(\tau_1,\tau_2)$ or $ (q_1,q_2)$. 
The main result of the present paper is the following
\begin{theo}
\label{maintheo}
 Let $f(q_1,q_2)\in \CYM_\en$ and assume that it is of the form
 $$
 f=f_0(q_1)+f_1(q_1)(q_2q_1^{\frac{\en}{2}})+\cdots +f_i(q_1)(q_2 q_1^{\frac{\en}{2}})^i+\cdots
 $$
 Then for arbitrary $\en$ and $(a_0, a_1,a_2)$ as in Table  
 \ref{taghiirnadejanam}  all $f_i(e^{\tau_1})$'s are in the field of quasi-modular forms on the upper half plane $\tau_1\in \uhp$ 
 for the  subgroup of $\SL 2\Z$ listed in the same table. 
 \end{theo}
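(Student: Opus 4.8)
The strategy is to understand the limit $q_2 q_1^{\en/2}\to 0$ as a degeneration of the Picard–Fuchs system $\PF_\en$ to a one-variable system governing the base of the elliptic fibration, and then identify that one-variable system with the classical hypergeometric/Picard–Fuchs equation of an elliptic curve, whose periods generate the ring of quasi-modular forms for the relevant congruence subgroup. Concretely, I would first change coordinates: set $w_1=z_1$ and $w_2 = z_2 z_1^{\en/2}$ (the mirror-map analogue of $q_2 q_1^{\en/2}$), and rewrite $\theta_1,\theta_2$ in the new Euler operators $\vartheta_1 = w_1\partial_{w_1}$, $\vartheta_2 = w_2\partial_{w_2}$. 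Under this substitution $\theta_2 = \vartheta_2$ and $\theta_1 = \vartheta_1 + \tfrac{\en}{2}\vartheta_2$. Expanding a solution as $\Pi = \sum_{i\ge 0}\Pi^{(i)}(w_1) w_2^i$ and collecting the coefficient of $w_2^0$ in $L_1,L_2$ should produce a single ODE in $w_1$ for $\Pi^{(0)}$.

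**Key steps, in order.** (1) Perform the coordinate change and verify that $L_2=0$, at order $w_2^0$, is satisfied identically (the $z_2$-term carries a positive power of $w_2$), so the constraint at leading order comes only from $L_1$; at order $w_2^0$, $L_1$ becomes $-\en\,(\vartheta_1+\tfrac{\en}{2}\vartheta_2)\vartheta_2 + (\vartheta_1+\tfrac{\en}{2}\vartheta_2)^2 - a_0 w_1(\vartheta_1+\tfrac{\en}{2}\vartheta_2 + a_1)(\vartheta_1+\tfrac{\en}{2}\vartheta_2+a_2)$ acting on $\sum_i\Pi^{(i)}w_2^i$; the $w_2^0$-coefficient of this, using that $\vartheta_2$ kills $\Pi^{(0)}$, yields $\vartheta_1^2\Pi^{(0)} - a_0 w_1(\vartheta_1+a_1)(\vartheta_1+a_2)\Pi^{(0)}=0$, a hypergeometric operator ${}_2F_1(a_1,a_2;1;a_0 w_1)$-type equation. (2) Identify, for each row of Table \ref{taghiirnadejanam}, this hypergeometric equation with the Picard–Fuchs equation of the corresponding elliptic curve (the generic fibre of the elliptic fibration), whose solution space is spanned by periods $\Pi^0_{(0)},\Pi^1_{(0)}$; standard theory (Fricke, or the theory of the $j$-line) then identifies $w_1$ as a Hauptmodul and $\Pi^0_{(0)}$, $\vartheta_1$-derivatives thereof, and the Wronskian-type combination $\Pi^0_{(0)}\vartheta_1\Pi^1_{(0)}-\Pi^1_{(0)}\vartheta_1\Pi^0_{(0)}$ as quasi-modular forms for the congruence subgroup in the table. (3) Show the passage to $f_i$ for $i\ge 1$ reduces to the same ring: differentiating the recursion relation in $w_2$ (equivalently, applying the Gauss–Manin connection in the $\tau_2$-direction and restricting), each $f_i$ is obtained from $f_0$ and lower $f_j$ by applying differential operators with coefficients that are themselves in $\CYM_\en$ evaluated at $w_2=0$, hence in the quasi-modular ring; one must check these operators preserve quasi-modularity, which follows because $\tfrac{d}{d\tau_1}$ and multiplication by the weight-$2$ quasi-modular generator generate the quasi-modular ring out of the modular one (Kaneko–Zagier). (4) Finally, verify that each generator in the list \eqref{2sept2015}, specialized at $w_2=0$ and after the mirror map $\tau_1=\Pi^1_{(0)}/\Pi^0_{(0)}$, lands in that quasi-modular ring — this is where the explicit hypergeometric-to-elliptic-curve dictionary of each table entry is used.

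**Main obstacle.** The routine part is the coordinate change and extracting the leading ODE; the genuinely delicate point is Step (2)–(4): proving that the \emph{Wronskian-type} generators $\theta_1^i\theta_2^j(\Pi^0\theta_a\Pi^b-\Pi^b\theta_a\Pi^0)$, which are the truly ``new'' Calabi-Yau-modular objects, degenerate to quasi-modular forms rather than to something transcendental — i.e.\ that no new period beyond those of the fibre elliptic curve survives in the limit $w_2\to 0$. This requires controlling the full two-variable period lattice near the large-complex-structure boundary component $\{w_2=0\}$ and showing the limiting mixed Hodge structure is a Tate twist of that of the fibre; equivalently, that the unipotent monodromy around $\{z_2=0\}$ contributes only $\log q_2 = \tau_2$ factors and periods polynomial in them, so that collecting the $w_2^0$ coefficient strips these off cleanly. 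I expect to handle this via the explicit Frobenius-method solution of $\PF_\en$ (the holomorphic and logarithmic solutions $\Pi^0,\Pi^a$ are already described in the excerpt), showing termwise in the $w_2$-expansion that each coefficient function of $w_1$ solves an inhomogeneous hypergeometric equation with right-hand side built from lower-order terms, and then invoking closure of the quasi-modular ring under $\SL 2\Z$-equivariant differential operators.
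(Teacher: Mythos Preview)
Your overall strategy is essentially the paper's: expand the periods order by order in a ``base'' variable, identify the leading order with the Gauss hypergeometric equation $L=\theta_1^2-a_0z_1(\theta_1+a_1)(\theta_1+a_2)$, and then propagate to higher orders by inhomogeneous recursions. The paper also works with the Wronskians $W^{a,b}$ directly, as you anticipate in your ``main obstacle'' paragraph.

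There is, however, a genuine technical gap that you have not identified, and it is exactly the step that makes the argument go through. Your recursion produces, for the period $\Pi^2$, the \emph{nonhomogeneous} equation $L\,\Pi^2_0 = \en\,\theta_1\Pi^0_0$ at leading order. The variation-of-parameters solution of this involves the integrals $\int y_j\,(\theta\Pi^0_0)\,W^{-1}\,dz/z$, where $W=(1-a_0z)^{-(a_1+a_2)}$ is the Wronskian of $L$. Generically these integrals are transcendental and $\Pi^2_0$ is \emph{not} in $\C(z,F,\theta F)$. The entire argument hinges on the arithmetic coincidence
\[
a_1+a_2=1,
\]
which holds for every row of Table~\ref{taghiirnadejanam} and collapses the integrand to something elementary. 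One then finds explicitly $\Pi^2_0=-\tfrac{\en}{2}\,\Pi^0_0\,\log\!\bigl(q\,(1-a_0z)/z\bigr)$. This logarithm is the \emph{origin} of the shift $q_2q_1^{\en/2}$ in the theorem: it is not something you may impose by a coordinate change, but something you must derive. In particular your proposed substitution $w_2=z_2z_1^{\en/2}$ is not the $z$-side analogue of $q_2q_1^{\en/2}$; the correct leading behaviour (Proposition~\ref{11/09/2015}) is $q_2q_1^{\en/2}\sim z_2\bigl(z_1/(1-a_0z_1)\bigr)^{\en/2}$, and only with this choice does the recursion for the remainder terms $B_i$ start at $B_0=0$.

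Once $a_1+a_2=1$ is invoked and $\Pi^2_0$ is written as above, the $\log$ contributions cancel in the Wronskians $W^{a,2}_0$, and the rest of your inductive plan (closure under $q_1^{-\en/2}\partial_{q_2}$, Kaneko--Zagier) is precisely the paper's Propositions~\ref{17oct2015-2}--\ref{11/09/2015} and \S\ref{sec:difffield}. So your outline is right, but you must insert the explicit integration using $a_1+a_2=1$ at the point where you currently write ``I expect to handle this via the explicit Frobenius-method solution''; without it the inhomogeneous solutions escape the field and the argument does not close.
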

 \begin{table}
\centering
\begin{tabular}{|c|c|c|}
\hline
$(a_0,a_1,a_2)$  & Group & Modular forms \\ \hline
$(432,5/6,1/6)$ & $\SL 2\Z$ & $E_4(\tau),E_6(\tau)$ \\ \hline
$(64,3/4,1/4)$  & $\Gamma_0(2)$  & $E_2(\tau)-2E_2(2\tau),\ E_4(\tau)$ \\ \hline
$(27,2/3,1/3)$  &  $\Gamma_0(3)$ &  $E_2(\tau)-3E_2(3\tau),\ E_4(\tau), E_6(\tau)$  \\ \hline
$(16,1/2,1/2)$  &   $\Gamma(2)$ &  $\theta_2^4,\theta_3^4$  \\
\hline
\end{tabular}
\caption{Modular groups}
\label{taghiirnadejanam}
\end{table}
 There is a tremendous amount of computation in the Physics literature confirming our main theorem for $\en=3$ and
$(a_1,a_2)=(1/6,5/6)$, see \S\ref{motivationsection}.
It does not seem to us that there is any Physics for $\en\geq 5$. The case $\en=4,\  (a_1,a_2)=(1/6,5/6)$ 
is the main motivation for us. In this case we have a collection of four-point functions 
$C^{(1,1,1,1)}_{abcd}\in \CYM_\en, a,b,c,d=1,2$ which are invarinat under index permutations, for definitions see 
\eqref{eq:yukawa} and \eqref{eq:yukdef}.   For instance, 
we derive the following identity for the four-point function 
\begin{eqnarray}
	C^{(1,1,1,1)}_{2222} & = & - q_2 \left(\frac{q_1^2}{\eta^{48}}\right) \left[\frac{5}{9} E_4 E_6(35 E_4^3+37 E_6^2)\right] \nonumber \\
	~ & ~ & - q_2^2 \left(\frac{q_1^4}{\eta^{96}}\right) \Big[\frac{5}{124416}E_4 E_6 (12377569 E_4^9 + 1960000 E_2 E_4^7 E_6 \nonumber \\
	~ & ~ & + 85433141 E_4^6 E_6^2 + 4144000 E_2 E_4^4 E_6^3 + 86392307 E_4^3 E_6^4 + 2190400 E_2 E_4 E_6^5 \nonumber \\
	~ & ~ & +11544823 E_6^6)\Big] + \mathcal{O}(q_2^3). 
\end{eqnarray}
There is no enumerative geometry attached to this function. However, if we write it in terms
of three-point functions $C^{(1,1,2)}_{ab\gamma_i}, a,b=1, i=1,2$
\begin{equation} 
\label{23oct2015-1}
	C^{(1,1,1,1)}_{abcd} =-4C^{(1,1,2)}_{ab\gamma_1}C^{(1,1,2)}_{cd \gamma_1}+
	C^{(1,1,2)}_{ab\gamma_2}C^{(1,1,2)}_{cd \gamma_1}+C^{(1,1,2)}_{ab\gamma_1}C^{(1,1,2)}_{cd \gamma_2},
\end{equation}
then from $C^{(1,1,2)}_{ab\gamma_i}$ we can derive the Gromov-Witten potentials $F^0(\gamma_i),\ \ i=1,2$:
\begin{equation}
\label{23oct2015-2}
	C^{(1,1,2)}_{ab\gamma_i} = \partial_{\tau_a} \partial_{\tau_b} F^0(\gamma_i),\ \ a,b, i=1,2.
\end{equation}
We find that \eqref{23oct2015-1} together with \eqref{23oct2015-2} allows us to solve for the functions 
$C^{(1,1,2)}_{22\gamma_i}$ at least to low orders in an expansion in $q_2$ which determines the potantials 
$F^0(\gamma_i)$ in such an expansion as follows\footnote{In all following appearances of $F^0(\gamma_i)$ we will 
suppress terms logarithmic in the $q_i$ as these only contain information about classical intersection numbers.}
\begin{eqnarray}
\label{23/10/2015}
	F^0(\gamma_1) & = & - q_2 \left(\frac{q_1^2}{\eta^{48}}\right) \left[\frac{5}{18} E_4 E_6(35 E_4^3+37 E_6^2)\right] + \mathcal{O}(q_2^2), \\ \label{eq:Fgamma2}
	F^0(\gamma_2) & = & 1 + q_2 \left(\frac{q_1^2}{\eta^{48}}\right)\left[\frac{5}{10368}(10321 E_4^6 + 1680 (-24+E_2)E_4^4 E_6\right. \\
	~ & ~ &  + 59182 E_4^3 E_6^2 + 1776 (-24 + E_2)E_4 E_6^3 + 9985 E_6^4)\Big] + \mathcal{O}(q_2^2) .\nonumber 
\end{eqnarray}
We now explain the enumerative geometry of the coeffiecients of
\begin{eqnarray*}
f_1 &=&  - \frac{5}{18} \frac{1}{\eta^{48}}\left ( E_4 E_6(35 E_4^3+37 E_6^2)\right)\\
&=& 
-20q_1^{-2} + 7680q_1^{-1} - 1800000 + 278394880q_1 +  \cdots+N_{0, d_1,1}(\gamma_1)q_1^{d_1-2}+ \cdots
\end{eqnarray*}
For further details see \cite{Klemm:2007in}. The $B$-model Calabi-Yau fourfold $X_z$ underlying the Picard-Fuchs system $\L_\en,\ \ \en=4$, is mirror dual to a Calabi-Yau fourfold $\widetilde X$ which is the resolution  of the degree $24$ hypersurface in 
$\Pn5(1,1,1,1,8,12)$. The resolution is done by blowing-up once at the unique singular point $x_1=x_2=x_3=x_4=0$.
Let $\tilde D_1\cong \Pn 3$ be the 
corresponding exceptional divisor. The variety $\widetilde X$ 
has the Hodge numbers $h^{0,0}=h^{4,0}=1, \ h^{11}=2,\ h^{31}=3878,\ 
h^{22}=15564$ and its elliptic fibration is given by $\widetilde X\to \Pn 3$ which is a projection to the first four coordinates.
 Let $D_2$ be the divisor in $\widetilde X$ which is a pull-back of a linear $\Pn 2\subset \Pn 3$ and $D_1=4D_2+\tilde D_1$. 
For $\beta\in H_2(\widetilde X,\Z)$ and $\gamma\in H^4(\widetilde X,\Z)$ we have the Gromov-Witten invariants
\begin{equation}
\label{29oct2015}
N_{g,\beta}(\gamma)=\int_{[\bar M_{g,1}(\widetilde X,\beta)]^{\rm virt}} {\rm ev}^*(\gamma),
\end{equation}
where $\bar M_{g,1}(\widetilde X,\beta)$ is the moduli space of genus $g$, $1$-pointed stable maps to $\widetilde X$ representing the class
$\beta$ and ${\rm ev }:  M_{g,1}(\widetilde X,\beta)\to \widetilde X $ is the evaluation map. 
We take a basis $[E],[\Pn 1]\in H_2(\widetilde X,\Z)$, where $[E]$ is the homology class of fibers of $\widetilde X\to \Pn 3$ and 
$[\Pn 1]$ is the homology class of a line $\Pn 1$ inside $\tilde D_1$. 
We write $N_{g,d_1,d_2}(\gamma):=N_{g,d_1[E]+d_2[\Pn 1]}(\gamma)$. 
In our formula \eqref{23/10/2015}, $\gamma_1$ is the Poincar\'e dual to $D_2^2$ and $\gamma_2$ is dual to a linear 
combination of $D_2^2$ and $D_1 D_2$.  Our modular expressions for the  Gromov-Witten generating functions are proved by using the B-model
side of mirror symmetry, and showing such statements for the $A$-model side by using the definition 
\eqref{29oct2015} are highly non-trivial open problems. 

 


\subsection{Motivation}
\label{motivationsection}

Recently, there has been a lot of progress and activity in solving the topological string on elliptic Calabi-Yau threefolds 
\cite{Klemm:2012sx,Alim:2012ss,Haghighat:2013gba,Haghighat:2013tka,Haghighat:2014pva,Kim:2014dza,Cai:2014vka,Haghighat:2014vxa,
Huang:2015sta,Gadde:2015tra,Kim:2015fxa}. 
In the case of non-compact Calabi-Yau three-folds these results lead to the computation of refined stable pair invariants 
\cite{Choi:2012jz} and translate on the physics side to partition functions of 6d SCFTs. 
In the compact case the topological string partition function is the generating function of Gromov-Witten invariants and on the physics side leads to the computation of the entropy of black holes \cite{Haghighat:2015ega}. In all these cases one can observe that topological string free energies are fully expressible in terms of classical modular forms. We review these results here where we confine ourselves to the case of compact Calabi-Yau three-folds $\widetilde X$ with a complex two-dimensional base $B$ and elliptic fibre $E$\footnote{In general the fibre can degenerate over co-dimension one loci in the base and lead to more cohomology classes whose intersection matrices are given by ADE dynkin diagrams as described by Kodaira. Here we limit ourselves to the case where there is only one such cohomology class.}.
 Here one can define a generating function for the Gromov-Witten invariants in terms of a genus expansion in a parameter $\lambda$
\begin{equation}
	F(\lambda,\underline{q}) = \sum_{g=0}^{\infty} \lambda^{2g-2} F^{(g)}(\underline{q}),
\end{equation}
where the upper index $g$ indicates the genus. According to the split of the cohomology $H_2(\widetilde X,\mathbb{Z})$ into the base and the fibre cohomology, we define $q_B^{\beta} = \prod_{k=1}^{b_2(B)}\exp(2\pi i \int_{\beta} i \omega + b)$, where $\beta \in H_2(B,\mathbb{Z})$, and $q = \exp(2\pi i \int_f i\omega + b)$ with $f$ being the curve representing the fibre\footnote{$i\omega + b$ denotes the complexified K\"ahler form.}. We now define 
\begin{equation}
	F^{(g)}_{\beta}(q) = \textrm{Coeff}(F^{(g)}(\underline{q}),q_B^{\beta}).
\end{equation}
Then one observes \cite{Klemm:2012sx} that the $F_{\beta}^{(g)}(q)$ have distinguished modular properties and can be written as
\begin{equation}
	F^{(g)}_{\beta} = \left(\frac{q^{\frac{1}{24}}}{\eta}\right)^{12 \sum_i c_i \beta^i} P_{2g+6\sum_i c_i \beta^i -2}(E_2,E_4,E_6),
\end{equation}
where $P_{2g+6 \sum_i c_i \beta^i -2}(E_2,E_4,E_6)$ are (quasi)-modular forms of weight $2g+6 \sum_i c_i \beta^i - 2$ and the $c_i$ are integer coefficients depending on the base $B$. 

As was first observed in \cite{Haghighat:2013gba}, the above modularity properties can be repackaged in the topological string partition function leading to a sum over meromorphic Jacobi forms:
\begin{equation}
	Z(\underline{q},\lambda) = \exp\left(F(\lambda,\underline{q})\right) = \sum_{\beta} q_B^{\beta} Z_{\beta}(q,\lambda),
\end{equation}
where $Z_{\beta}$ are Jacobi forms of weight zero with index a quadratic form on $H_2(B,\mathbb{Z})$. This repackaging has led to the first all-genus solutions of the topological string on compact Calabi-Yau manifolds \cite{Huang:2015sta}. 

Motivated by these results, our objectives for the present paper are to give mathematical proofs for 
modularity properties of topological 
string amplitudes for elliptic Calabi-Yau n-folds with $n \geq 3$.

\subsection*{Acknowledgements}

We are thankful to Stefan Reiter who helped us regarding many properties of linear differential equations. Thanks go to Viktor Levandovskyy who thought us how to use the libraries {\tt nctools.lib} and {\tt dmodapp.lib} for non-commutative rings  in {\tt Singular}. We would also like to thank Cumrun Vafa for valuable discussions.

\section{Preliminaries}
\subsection{Toric geometry of elliptically fibred Calabi-Yau varieties}
\label{toricgeometry}
In this paper we confine ourselves to the class of elliptically fibred Calabi-Yau n-folds over $\mathbb{P}^{n-1}$. The elliptic fibre can be one of four types depending on the weighted projective space in which it is realized. Denote by $\mathbb{P}^2(w_1,\cdots, w_r)$ a projective bundle over the base $B = \mathbb{P}^{n-1}$. The four classes are given by four choices of weights $(w_1,\cdots, w_r) = \left\{(1,2,3),(1,1,2),(1,1,1),(1,1,1,1)\right\}$ leading to elliptic curves which are hypersurfaces in the first three cases and a complete intersection in the 
last case. The Calabi-Yau manifolds corresponding to the first three cases can be realized as hypersurfaces in toric ambient spaces. The corresponding polyhedron with the Mori cone vectors is given by \cite{Klemm:2012sx}:
\begin{equation}
	\begin{array}{c|cccccc|cc}
		~     & ~ & ~ & ~         & ~ & ~ & ~ & l^{(1)}            & l^{(2)} \\
		D_0 & 1 & 0 & \cdots & 0 & 0 & 0 & \sum_i e_i -1 & 0 \\
		D_1 & 1 & ~ & ~ & ~ & e_1 & e_2 & 0 & 1 \\
		\vdots & 1 & ~ & \Delta_B & ~ & \vdots & \vdots & \vdots & \vdots \\
		D_n & 1 & ~  & ~ & ~ & e_1 & e_2 & 0 & 1 \\
		D_z & 1 & 0 & \cdots & 0 & e_1 & e_2 & 1 & -n \\
		D_x & 1 & 0 & \cdots & 0 & 1    &  0 & -e_1 & 0 \\
		D_y & 1 & 0 & \cdots & 0 & 0    & 1  & -e_2 & 0.
	\end{array}		
\end{equation}
In the above $\Delta_B$ represents the toric polyhedron of the base which in our case is $\mathbb{P}^{n-1}$:
\begin{equation}
	\begin{array}{c|cccc}
		D_1     & 1 & 0 & \cdots & 0 \\ 
		D_2     & 0 & 1 & \cdots & 0 \\
		\vdots & \vdots & 0 & \ddots & 0 \\
		D_{n-1} & 0 & \cdots & 0 & 1 \\
		D_n & -1 & -1 & \cdots & -1
	\end{array}
\end{equation}
Furthermore, $e_1$ and $e_2$ are determined by the three types of elliptic curves which are realized as hypersurfaces:
\begin{equation}
	\left\{(e_1,e_2)\right\} = \left\{(-2,-3),(-1,-2),(-1,-1)\right\}.
\end{equation}
Using the Mori cone vectors $l^{(1)}$ and $l^{(2)}$ one derives (see \cite{Hosono:1993qy}) the 
Picard-Fuchs system $\PF_{\en}$ in \eqref{eq:pfoperator1} and \eqref{eq:pfoperator2}.
It depends on the Euler number of the base $\chi=\en$.
The vector $(e_1,e_2)$ determines $\PF_\en$ with:
\begin{eqnarray}
	(e_1,e_2) = (-2,-3) & \Rightarrow & (a_0,a_1,a_2) = (432,5/6,1/6) \nonumber \\
	(e_1,e_2) = (-1,-2) & \Rightarrow & (a_0,a_1,a_2) = (64,3/4,1/4) \nonumber \\
	(e_1,e_2) = (-1,-1) & \Rightarrow & (a_0,a_1,a_2) = (27,2/3,1/3).
\end{eqnarray}
We also include the last case where the fibre elliptic curve is realized as a complete intersection in $\mathbb{P}^3$ \cite{Klemm:2012sx}:
\begin{equation}
	(a_0,a_1,a_2) = (16,1/2,1/2).
\end{equation}

\section{Non-commutative rings}
Let $\C[z,\theta]=\C[z_1,z_2,\cdots, z_\hn,  \theta_1,\theta_2,\cdots,\theta_\hn]$ be a
non-commutative ring with non-commutative relations
$$
\theta_iz_i=z_i(\theta_i+1).
$$
Here, the variable $\theta_i:= z_i\frac{\partial}{\partial z_i}$ can be interpreted as  the logarithmic derivation. 
Let also $\PF$ be a finitely generated left ideal of $\C[z,\theta]$. For a fixed coordinate $z_2$, the restriction 
of $I$ to $z_2=0$ is defined to be
$$
\PF\mid_{z_2=0}:=\left \{ \ \ A\in \C[\hat z ,\hat \theta] \ \  
\mid \  \  \exists  B_1,B_2\in \C[z,\theta],\   A+z_2 B_1+\theta_2B_2 \in I \ \ \ \right\}.
$$
Here, $\hat z$ (resp. $\hat \theta$) is $z$  (resp. $\theta$) with $z_2$  (resp. $\theta_2$) removed. The computer algebra
{\sc Singular}, see \cite{GPS01}, has two libraries  nctools.lib, dmodapp.lib  for dealing with non-commutative ideals and their 
restrictions. If $\Pi^0$ is a holomorphic solution of $\L$ then $\Pi^0\mid_{z_2=0}$ is a holomorphic solution of $\L\mid_{z_2=0}$.  
We are mainly interested in the case where  $\hn=2$. In this paper we only need the following
\begin{prop}
 Let $\L_\en\subset \C[z_1,z_2,\theta_1,\theta_2]$ be the left ideal generated by $L_1$ and $L_2$ in 
 \eqref{eq:pfoperator1} and \eqref{eq:pfoperator2}. The restriction $\PF_{\en}\mid_{z_2=0}$ is generated by
 \begin{equation}
\label{khastambegam-2}
L:=\theta_1^2-a_0 z(\theta_1+a_1)(\theta_1+a_2). 
\end{equation}
\end{prop}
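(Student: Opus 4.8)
The plan is to compute the restriction $\PF_\en\mid_{z_2=0}$ directly from the two generators $L_1, L_2$ by keeping exactly those consequences of the ideal that survive setting $z_2=0$ and $\theta_2=0$. First I would look at $L_1$ in \eqref{eq:pfoperator1}: writing $L_1 = -\en\,\theta_1\theta_2 + \theta_1^2 - a_0 z_1(\theta_1+a_1)(\theta_1+a_2)$, the first term $-\en\,\theta_1\theta_2 = -\en\,\theta_2\,\theta_1$ (the operators $\theta_1$ and $\theta_2$ commute) is visibly of the form $\theta_2 B_2$ with $B_2 = -\en\,\theta_1 \in \C[z,\theta]$. Hence $L_1 - \theta_2(-\en\,\theta_1) = \theta_1^2 - a_0 z_1(\theta_1+a_1)(\theta_1+a_2)$ lies in the ideal, and this element involves neither $z_2$ nor $\theta_2$; writing $z=z_1$ it is exactly the operator $L$ of \eqref{khastambegam-2}. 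Therefore $L \in \PF_\en\mid_{z_2=0}$, and the left ideal it generates in $\C[z_1,\theta_1]$ is contained in the restriction.

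The substantive direction is the reverse inclusion: every element of $\PF_\en\mid_{z_2=0}$ is a left multiple of $L$ in $\C[z_1,\theta_1]$. Here I would argue that $L_2$ contributes nothing new after restriction. Examine \eqref{eq:pfoperator2}: $L_2 = \theta_2^\en - (-1)^n z_2(\en\theta_2-\theta_1)(\en\theta_2-\theta_1+1)\cdots(\en\theta_2-\theta_1+\en-1)$. The leading term $\theta_2^\en$ is $\theta_2\cdot(\theta_2^{\en-1})$, of the form $\theta_2 B_2$; the remaining term has an explicit factor of $z_2$, so it is $z_2 B_1$ with $B_1 = -(-1)^n(\en\theta_2-\theta_1)\cdots(\en\theta_2-\theta_1+\en-1)$. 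Thus $L_2 \in z_2\C[z,\theta] + \theta_2\C[z,\theta]$, so $L_2$ itself contributes the zero element to the restriction and, more to the point, any left-ideal element $P_1 L_1 + P_2 L_2$ reduces modulo $z_2\C[z,\theta]+\theta_2\C[z,\theta]$ to the $z_2=0,\theta_2=0$ reduction of $P_1 L_1$, which equals (the reduction of $P_1$) times $L$.

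To make this last clause rigorous I would introduce the quotient map $\pi : \C[z,\theta] \to \C[z,\theta]/(z_2\C[z,\theta]+\theta_2\C[z,\theta])$; one checks that $z_2\C[z,\theta]+\theta_2\C[z,\theta]$ is a left ideal (using $\theta_1 z_2 = z_2\theta_1$, $\theta_2 z_2 = z_2(\theta_2+1)$, so that left multiplication preserves it), and that the quotient is naturally identified with $\C[z_1,\theta_1]$ via $z=z_1$. Then by definition $A \in \PF_\en\mid_{z_2=0}$ iff $A \in \C[z_1,\theta_1]$ and $\pi(A) = \pi(P_1 L_1 + P_2 L_2)$ for some $P_1, P_2$. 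Since $\pi(L_2)=0$ and $\pi(L_1) = \pi(L) = L$, we get $A = \pi(P_1)\cdot L$, exhibiting $A$ as a left multiple of $L$. Combined with the first paragraph this gives $\PF_\en\mid_{z_2=0} = \C[z_1,\theta_1]\cdot L$, as claimed.

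The main obstacle, and the one point that needs a little care rather than being purely formal, is verifying that $z_2\C[z,\theta] + \theta_2\C[z,\theta]$ is genuinely a two-sided-stable-enough object for this argument — precisely, that it is a left ideal so that $\pi$ is a ring-type map on the relevant side and so that $\pi(P_1 L_1) = \pi(P_1)\pi(L_1)$ makes sense as a left-module statement. This hinges on the commutation relations $\theta_i z_i = z_i(\theta_i+1)$ and $\theta_i z_j = z_j\theta_i$ for $i\neq j$, which show that multiplying a generator $z_2$ or $\theta_2$ on the left by any monomial in $z,\theta$ stays inside $z_2\C[z,\theta]+\theta_2\C[z,\theta]$. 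Once that bookkeeping is in place the proof is essentially the two observations above: $L_1$ reduces to $L$ and $L_2$ reduces to $0$.
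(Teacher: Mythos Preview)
Your proof is correct and follows essentially the same approach as the paper, which simply states that the result ``follows immediately from the explicit form of $L_1$ and $L_2$.'' You have carefully unpacked what ``immediate'' means here: the term $-\en\,\theta_1\theta_2$ in $L_1$ lies in $\theta_2\C[z,\theta]$ so that $L_1$ reduces to $L$, while $L_2=\theta_2^\en-(-1)^n z_2(\cdots)$ lies entirely in $\theta_2\C[z,\theta]+z_2\C[z,\theta]$ and hence reduces to $0$; your verification that $z_2\C[z,\theta]+\theta_2\C[z,\theta]$ is a two-sided ideal (so that $\pi$ is multiplicative and the reverse inclusion follows) is the only bookkeeping the paper omits.
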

\begin{proof}
This follows immediately from the explicit form of $L_1$ and $L_2$.
\end{proof}
From now on we write $z,\theta$ etc. instead of $z_1,\theta_1$ in situations where we have taken the limit $z_2\to 0$.
In Appendix \ref{21/10/2015} we have computed more restrictions of non-commutative ideals.

\subsection{Modular forms and Gauss hypergeometric equation}
\label{1nov2015}
In the literature, we can find many examples of modular forms derived from the solutions of 
the Gauss hypergeometric equation \eqref{khastambegam-2} and for  particular values of $a_0,a_1,a_2$, however, a uniform approach 
for arbitrary parameters $a_i$ has been recently developed in \cite{hokh2} and \cite{hosseinkhosro}.
In \cite{GMCD-MQCY3} page 155 we have shown that the mirror map/Schwarz map of \eqref{khastambegam-2} 
has integral $q$-coefficients if and only if  the pair
$a_1,a_2$ belongs to the class of $28$ elements in Table \ref{28cases}.
{

\begin{table}
\centering
\begin{tabular}{|c|}
\hline
$
( 1/2 , 1/2 ),
( 2/3 , 1/3 ),
( 3/4 , 1/4 ),
( 5/6 , 1/6 ),
$
\\
$
( 1/6 , 1/6 ),
( 1/3 , 1/6 ),
( 1/2 , 1/6 ),
( 1/3 , 1/3 ),
( 2/3 , 2/3 ),$
\\
$( 1/4 , 1/4 ),
( 1/2 , 1/4 ),
( 3/4 , 1/2 ),
( 3/4 , 3/4 ),
( 1/2 , 1/3 ),
$
\\
$
( 2/3 , 1/6 ),
( 2/3 , 1/2 ),
( 5/6 , 1/3 ),
( 5/6 , 1/2 ),
( 5/6 , 2/3 ),
$
\\
$
( 5/6 , 5/6 ),
( 3/8 , 1/8 ),
( 5/8 , 1/8 ),
( 7/8 , 3/8 ),
( 7/8 , 5/8 ),
$
\\
$
( 5/12 , 1/12 ),
( 7/12 , 1/12 ),
( 11/12 , 5/12 ),
( 11/12 , 7/12 )
$\\ \hline 
 \end{tabular}
\caption{$N$-integral hypergeometric  mirror maps. }
\label{28cases}
\end{table}
}
For the proof of Theorem \ref{maintheo} we will need the condition $a_1+a_2=1$. This reduces our table above to the four cases of
$(a_1,a_2)$ shown in Table \ref{taghiirnadejanam}. The parameter $a_0$ is just a rescaling of $z_1$ and $\en$ can 
be any positive integer. For all $28$ examples in the Table \ref{28cases}  
one can determine an arithmetic group $\Gamma$, which is basically the monodromy
group of $L$, and the corresponding algebra of modular forms. 
For our purposes we only need the four cases relevant for this article and gathered in Table \ref{taghiirnadejanam}. 
In this table $E_i$'s and $\theta_i$'s are classical Eisenstein and theta series, respectively. The quasi-modular forms 
in each case are given by the $\C$-algebra generated by $E_2$ and the modular forms in the third column. In the last row note that
$\theta_4^4=\theta_3^4-\theta_2^4$. In the third row we have a polynomial relation between the three modular forms there, 
see for instance  the last section of \cite{ho14-II}.

\subsection{Hypergeometric functions}
\label{hypergeometricsection}
\def\losu {{G}}
In this section we first recall some well-known properties of the hypergeometric function 
$$
F(a,b|z)={_pF_q}(a_1,a_2,\cdots,a_p, b_1,b_2,\ldots b_q|z)=
\sum_{k=0}^\infty \frac{(a_1)_k\dots(a_p)_k}{(b_1)_k\cdots (b_q)_k k!} \, z^k, 
$$
$$
\quad |z|<1
,\ \ b_i\not =0, -1,-2,\cdots
$$ 
which satisfies the linear differential equation $L(a,b)F(a,b|z)=0$, where 
\begin{equation}
\label{3sept2015}
L(a,b)= \theta(\theta+b_1-1)(\theta+b_2-1)\cdots(\theta+b_q-1)-z(\theta+a_1)(\theta+a_2)\cdots (\theta+a_p)=0
\end{equation}
$(a_i)_k=a_i(a_i+1)(a_i+2)...(a_i+k-1),\,(a_i)_0 = 1$ is the Pochhammer symbol and $\theta=z\frac{d}{dz}$. 
For $q=p-1$ and $b_1=b_2=\cdots=b_q=1$, we have also the following logarithmic solution $\losu(a,1|z)+F(a,1|z)\log z$,  
where
\begin{equation}
\label{logsol}
\losu(a,1| z)=\sum_{k=1}^\infty \frac{(a_1)_k\cdots(a_p)_k}{(k!)^p}\big{[}\sum_{j=1}^ p\sum_{i=0}^{k-1}(\frac{1}{a_j+i}-\frac{1}{1+i})\big{]}z^k.
\end{equation}
We would like to find solutions of $L(a,b)$ when some of the $b_i$'s are  negative integers or zero.  
Let ${F}$ be any solution of
$L(a,b)$. We note that 
$z^a {F}$ satisfies
$$
(\theta-a)(\theta+b_1-1-a)(\theta+b_2-1-a)\cdots(\theta+b_q-1-a)-z(\theta+a_1-a)(\theta+a_2-a)\cdots (\theta+a_p-a)=0
$$
and so $z^{b_1-1} {F}$ satisfies 
$$
L(a_1-b_1+1,\cdots, a_p-b_1+1; 2-b_1, b_2-b_1+1,\ldots, b_q-b_1+1)=0.
$$
We will also need the following
\begin{equation}
 \frac{\partial }{\partial z}\ \ {F}(a_1\cdots,a_p,b_1,\cdots,b_q|z)=
 \frac{a_1a_2\cdots a_p}{b_1b_2\cdots b_q}\ \ {F}(a_1+1,\cdots,a_p+1, b_1+1,\cdots,b_q+1|z).
\end{equation}
Let us proceed to the discussion for the case of the classical Gauss hypergeometric equation with  $p=q+1=2$.  
We conclude that two solutions of 
$$
(\theta-n-1)\theta+z(\theta+a_1)(\theta+a_2)=0,\ \  n\in\N_0
$$
are given by $z^{n+1}F(a_1+n+1,a_2+n+1,n+2|z)$, where $F$ here refers to two solutions of $L(a_1+n+1,a_2+n+1,n+2)$. 
For the holomorphic solution $F$ this can be also seen using the limit
$$
\lim_{b_1\to -n} \frac{F(a_1,a_2,b_1\mid z)}{\Gamma(b_1)}= \frac{(a_1)_{n+1}(a_2)_{n+1}}{(n+1)!}z^{n+1}F(a_1+n+1,a_2+n+1;n+2|z).
$$

\section{Proof of Theorem \ref{maintheo}}
The proof of Theorem \ref{maintheo} is given at the level of periods or solutions of linear differential equations.
More precisely, we prove that for $f(z_1,z_2)\in \CYM_\en$ of the form
 $$
 f=f_0(z_1)+f_1(z_1)(q_2q_1^{\frac{\en}{2}})+\cdots +f_i(z_1)(q_2 q_1^{\frac{\en}{2}})^i+\cdots
 $$
 all $f_i(z)$ are in the field $\C(z,F,\theta F)$, where $F$ is the Gauss  hypergeometric function. After inserting the mirror map in
 $f_i$'s one gets the main result as stated in Theorem \ref{maintheo}, see \S\ref{1nov2015}. The above can equivalently be rewritten as
 $$
 f_i(z_1)=\left.\frac{1}{i!}(q_1^{\frac{\en}{2}}\frac{\partial}{\partial q_2})^{(i)}f\right|_{q_2=0}.
 $$
\subsection{Solutions of $\PF_\en$}
\label{solutionssection}
Let us consider the Picard Fuchs system. Let $L_1$ and $L_2$ be as in \eqref{eq:pfoperator1} and \eqref{eq:pfoperator2} 
and let $L$ be the Gauss hypergeometric equation \eqref{khastambegam-2}. It is also usefull to define
\begin{equation}
\label{harrishay}
L^{m}:= L-m\theta_1.
\end{equation}
We have $L_1=L-\en\theta_1\theta_2$. We have three solutions of $\PF_\en $ 
of the form:
\begin{eqnarray*}
 \Pi^0&=&1+\sum_{i=1}^\infty\Pi^0_i(z_1)z_2^i\\
  \Pi^{a}&=&\Pi^0\ln(z_a)+ \sum_{i=1}^\infty\Pi^a_i(z_1)z_2^i \quad a=1,2\\ 
\end{eqnarray*}
We need to analyze the following Wronskians in the limit $z_2=0$:
\begin{equation}
 W^{a,b}:=
 \det\begin{pmatrix}
      \Pi^0 & \theta_{a}\Pi^0\\
      \Pi^b & \theta_{a}\Pi^{1b}\\
     \end{pmatrix}, \ \ a,b=1,2. 
\end{equation}
All $W_{a,b}$'s satisfy Picard-Fuchs differential equations of higher orders.  We will write
\begin{equation}
 W^{a,b}=\sum_{i=0}^\infty W^{a,b}_i(z_1)z_2^i. 
\end{equation}
In what follows we will use
the derivation of differential operators with respect to the differentiation variable, for instance 
$$
\frac{\partial L^{m}}{\partial \theta}=2(1-a_0z)\theta-a_0z(a_1+a_2)-m. 
$$
\begin{prop}
We have
\begin{eqnarray}
 \label{8sept2015-1}
 L^{\en\cdot i}\Pi^0_i &=&0,\\  \label{8sept2015-2}
L^{\en\cdot i}(\Pi^0_i\log(z_1)+\Pi^1_i) &=& 0,\\  \label{8sept2015-4}
L^{\en\cdot i}\Pi^1_i&=& -\frac{L^{\en\cdot i}}{\partial \theta}\Pi^0_i, \\
\label{8sept2015-3}
L^{\en\cdot i}\Pi^2_i&=& \en\cdot \theta_1\Pi^0_i. 
\end{eqnarray}
\end{prop}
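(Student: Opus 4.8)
The plan is to obtain all four identities by expanding a single relation of the form $L_1\Pi=0$ in powers of $z_2$ and matching coefficients, where $\Pi$ runs over $\Pi^0,\Pi^1,\Pi^2$; the only structural inputs are $L_1=L-\en\,\theta_1\theta_2$ and the fact that $\theta_2$ acts on $z_2^i$ as multiplication by $i$. I would first record the elementary observation that for any formal series $G(z_1,z_2)=\sum_{i\ge 0}g_i(z_1)z_2^i$, with coefficients possibly involving $\log z_1$, one has
$$
L_1G=\sum_{i\ge 0}\bigl(L^{\en i}g_i\bigr)\,z_2^i,
$$
because $L$ involves only $z_1,\theta_1$ and $\theta_1\theta_2\bigl(g_i(z_1)z_2^i\bigr)=i\,(\theta_1g_i)\,z_2^i$, so the coefficient of $z_2^i$ in $L_1G$ is $(L-\en\,i\,\theta_1)g_i=L^{\en i}g_i$. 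Applying this to the holomorphic solution $\Pi^0=\sum_i\Pi^0_iz_2^i$ in $L_1\Pi^0=0$ gives \eqref{8sept2015-1}, and applying it to $\Pi^1=\Pi^0\log z_1+\sum_i\Pi^1_iz_2^i$, whose coefficient of $z_2^i$ equals $\Pi^0_i\log z_1+\Pi^1_i$, in $L_1\Pi^1=0$ gives \eqref{8sept2015-2}.

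For \eqref{8sept2015-4} I would use the Leibniz rule for a differential operator acting against a logarithm: writing an operator with all its $z_1$-coefficients on the left, $P=\sum_k c_k(z_1)\theta_1^k$, induction on $k$ from $\theta_1(g\log z_1)=(\theta_1g)\log z_1+g$ gives
$$
P\,(g\log z_1)=(Pg)\log z_1+\frac{\partial P}{\partial\theta_1}\,g,\qquad \frac{\partial P}{\partial\theta_1}:=\sum_k k\,c_k(z_1)\,\theta_1^{k-1}.
$$
Taking $P=L^{\en i}$ and $g=\Pi^0_i$ in \eqref{8sept2015-2} and discarding the $\log z_1$ term by \eqref{8sept2015-1} leaves $\frac{\partial L^{\en i}}{\partial\theta_1}\Pi^0_i+L^{\en i}\Pi^1_i=0$, which is \eqref{8sept2015-4}; here $\frac{\partial L^{\en i}}{\partial\theta_1}=2(1-a_0z_1)\theta_1-a_0z_1(a_1+a_2)-\en\,i$, as in the formula recorded just before the statement.

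For \eqref{8sept2015-3} the only new feature is the extra $\log z_2$ in $\Pi^2=\Pi^0\log z_2+\sum_{i\ge1}\Pi^2_iz_2^i$. Since $\theta_2(z_2^i\log z_2)=i\,z_2^i\log z_2+z_2^i$, one finds $\theta_1\theta_2(\Pi^0\log z_2)=(\theta_1\theta_2\Pi^0)\log z_2+\theta_1\Pi^0$ and hence $L_1(\Pi^0\log z_2)=(L_1\Pi^0)\log z_2-\en\,\theta_1\Pi^0=-\en\,\theta_1\Pi^0$. Combined with $L_1\bigl(\sum_{i\ge1}\Pi^2_iz_2^i\bigr)=\sum_{i\ge1}(L^{\en i}\Pi^2_i)z_2^i$ from the first step, the equation $L_1\Pi^2=0$ becomes $\sum_{i\ge1}(L^{\en i}\Pi^2_i)z_2^i=\en\,\theta_1\Pi^0=\en\sum_{i\ge0}(\theta_1\Pi^0_i)z_2^i$, and comparing the coefficient of $z_2^i$ for $i\ge1$ yields \eqref{8sept2015-3}.

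The computation is essentially bookkeeping and I do not expect a genuine obstacle; the two points needing care are, first, arranging all $z_1$-coefficients on the left of the operators before forming $\partial/\partial\theta_1$, so that the relation $\theta_1z_1=z_1(\theta_1+1)$ does not interfere with the Leibniz rule, and second, keeping track of the inhomogeneous term $\en\,\theta_1\Pi^0$ produced when $\theta_2$ hits $\log z_2$ — this is precisely what makes \eqref{8sept2015-3} (and, via the $\partial/\partial\theta_1$ term, also \eqref{8sept2015-4}) inhomogeneous while \eqref{8sept2015-1}–\eqref{8sept2015-2} are homogeneous. Note finally that the second Picard–Fuchs operator $L_2$ plays no role here: the relation $L_2=0$ only produces the recursion expressing $\Pi^\bullet_{i+1}$ in terms of $\Pi^\bullet_i$, which is not needed for this proposition.
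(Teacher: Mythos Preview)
Your proposal is correct and follows essentially the same approach as the paper's proof: apply $L_1=L-\en\,\theta_1\theta_2$ to each of $\Pi^0,\Pi^1,\Pi^2$, read off the coefficient of $z_2^i$, and use the Leibniz-type identity $P(g\log z)=(Pg)\log z+\frac{\partial P}{\partial\theta}g$ to pass from \eqref{8sept2015-2} to \eqref{8sept2015-4}. Your write-up is in fact more explicit than the paper's, which is terse; your remarks about normal-ordering the coefficients before forming $\partial/\partial\theta_1$ and about $L_2$ playing no role here are apt and not stated in the paper.
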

\begin{proof}
 We just apply the operator $L-\en \theta_1\theta_2$ to $\Pi^0, \Pi^2$ and $\Pi^1$, respectively, and we arrive at the above equalities. Note that  the third one is the reformulation
 of the second one using the equality
 \begin{equation}
 L(f\log z)=L(f)\log z+\frac{\partial L}{\partial \theta}(f).
\end{equation}
In general, for  two holomorphic functions $f$ and $g$ in $z$ and a differential operator $L$ of order $k$ 
 in $z,\theta$, 
 we have used 
 \begin{equation}
  L(fg)=L(f)\cdot g+\frac{\partial L}{\partial \theta}(f)\cdot \theta g+\cdots+ 
  \frac{\partial^k L}{\partial \theta^k}(f)\cdot \theta^{k} g.
 \end{equation}
We can verify this easily for $L=\theta^n$ by induction on $n$.

\end{proof}

\begin{prop}
 The Wronskian of the differential operator $L^{m}$ in \eqref{harrishay} (up to miltiplication with a constant) 
 is
 $$
 (1-a_0z)^{-a_1-a_2}(\frac{z}{1-a_0z})^m. 
 $$
\end{prop}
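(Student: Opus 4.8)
The plan is to treat this as an Abel-type identity for the logarithmic derivation $\theta=z\,d/dz$ rather than for $d/dz$. By the \emph{Wronskian} of the second order operator $L^m$ I mean the determinant $W=\det\begin{pmatrix} f & \theta f\\ g & \theta g\end{pmatrix}=f\,\theta g-g\,\theta f$ formed from any basis $f,g$ of the local solution space; this is the same kind of object as the $W^{a,b}$ used above, and changing the basis multiplies $W$ by a nonzero constant, so it suffices to determine $W$ up to such a factor.

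First I would compute $\theta W$. Since $\theta$ is a derivation and the cross terms $(\theta f)(\theta g)$ cancel, $\theta W=f\,\theta^2 g-g\,\theta^2 f$. From $L^m f=0$ together with the expansion $(\theta+a_1)(\theta+a_2)=\theta^2+(a_1+a_2)\theta+a_1a_2$ one solves for the top derivative,
\begin{equation*}
(1-a_0z)\,\theta^2 f=\bigl(m+a_0(a_1+a_2)z\bigr)\theta f+a_0a_1a_2\, z\, f,
\end{equation*}
and similarly for $g$. Substituting into $(1-a_0z)\,\theta W=(1-a_0z)(f\,\theta^2 g-g\,\theta^2 f)$, the terms proportional to $a_0a_1a_2\,z\,fg$ cancel, leaving the first order equation
\begin{equation*}
(1-a_0z)\,\theta W=\bigl(m+a_0(a_1+a_2)z\bigr)W .
\end{equation*}

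It then remains to integrate this. Writing $\theta=z\,d/dz$ and $W'=dW/dz$ gives $\frac{W'}{W}=\frac{m+a_0(a_1+a_2)z}{z(1-a_0z)}$, whose partial fraction decomposition is $\frac{m}{z}+\frac{a_0(m+a_1+a_2)}{1-a_0z}$. Integrating yields $\log W=m\log z-(m+a_1+a_2)\log(1-a_0z)+\mathrm{const}$, hence
\begin{equation*}
W=\mathrm{const}\cdot z^{m}(1-a_0z)^{-(m+a_1+a_2)}=\mathrm{const}\cdot(1-a_0z)^{-a_1-a_2}\left(\frac{z}{1-a_0z}\right)^{m},
\end{equation*}
which is the asserted formula.

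No step here is genuinely difficult; the only point requiring care is the normalization, namely that $W$ is the $\theta$-Wronskian $f\,\theta g-g\,\theta f=z(fg'-gf')$ rather than the naive Wronskian $fg'-gf'$, the latter's Abel formula producing $z^{m-1}$ in place of $z^{m}$. As a consistency check one may observe that after rescaling $z\mapsto z/a_0$ the operator becomes $L^m=\theta(\theta-m)-z(\theta+a_1)(\theta+a_2)=L(a_1,a_2,1-m)$ in the notation of \eqref{3sept2015}, so the statement also follows from the classical formula for the Wronskian of the Gauss hypergeometric equation.
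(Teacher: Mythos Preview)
Your proof is correct and follows essentially the same approach as the paper: the paper's proof simply states the differential equation $\theta W=\frac{a_0(a_1+a_2)z+m}{1-a_0z}W$ for the Wronskian and leaves both its derivation and its integration to the reader, while you have spelled out both of these steps explicitly. Your additional remark distinguishing the $\theta$-Wronskian from the $d/dz$-Wronskian and the consistency check via the Gauss hypergeometric equation are helpful clarifications not present in the paper.
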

\begin{proof}
 We use the differential equation of the Wronskian $W$
 $$
\theta W=\frac{a_0(a_1+a_2)z+m}{1-a_0z}W.
 $$
\end{proof}
Using the properties of hypergemetric functions introduced in \S\ref{hypergeometricsection} 
we get the following: 
\begin{prop}
We have
 \begin{eqnarray}
 \label{nofa-1}
 \Pi^0_i &=& c^0_i 
 z^{\en\cdot i}\frac{\partial ^{\en\cdot i}}{\partial z^{\en\cdot i}}F(a_1, a_2,1|z),\\ \label{nofa-2}
 \Pi^0_i\log(z_1)+ \Pi^1_i &=& 
c^1_i z^{\en\cdot i}\frac{\partial ^{\en\cdot i}}{\partial z^{\en\cdot i}}\left(F(a_1, a_2,1|z)\ln(z_1)+\losu(a_1,a_2,1|z)\right)
+\tilde c^1_i\Pi^0_i,
 \end{eqnarray}
 where $c^0_i,c^1_i, \tilde c^1_i$ are constants. 
\end{prop}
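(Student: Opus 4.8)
The plan is to read both formulas off the second order ODEs \eqref{8sept2015-1} and \eqref{8sept2015-2}, which say that $\Pi^0_i$ and $\Pi^0_i\log(z_1)+\Pi^1_i$ lie in the kernel of the Fuchsian operator $L^{\en\cdot i}=L-\en\cdot i\,\theta_1$ of \eqref{harrishay}, together with an explicit description of the two dimensional space $\ker L^{\en\cdot i}$ obtained from \S\ref{hypergeometricsection}. After the rescaling $z=a_0z_1$ the operator $L$ of \eqref{khastambegam-2} is the Gauss hypergeometric operator $L(a_1,a_2;1)$, and then $L^{m}=\theta(\theta-m)-z(\theta+a_1)(\theta+a_2)$ is exactly $L(a_1,a_2;1-m)$. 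I would first invoke the two identities recalled in \S\ref{hypergeometricsection}: the derivative relation, under which $y\mapsto\partial_z^{m}y$ sends $\ker L(a_1,a_2;1)$ into $\ker L(a_1+m,a_2+m;m+1)$, and the substitution $F\mapsto z^{m}F$, which sends $\ker L(a_1+m,a_2+m;m+1)$ into $\ker L(a_1,a_2;1-m)$. Since for the parameters of Table \ref{taghiirnadejanam} neither $a_1$ nor $a_2$ is a non-positive integer, $F(a_1,a_2,1|z)$ is not a polynomial, so $\partial_z^{m}$ is injective on the two dimensional space $\ker L(a_1,a_2;1)$; being injective between two dimensional spaces, both maps are isomorphisms. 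Hence, with $m=\en\cdot i$, the space $\ker L^{\en\cdot i}$ is spanned by
$$
s_i:=z^{\en\cdot i}\frac{\partial^{\en\cdot i}}{\partial z^{\en\cdot i}}F(a_1,a_2,1|z),\qquad
\ell_i:=z^{\en\cdot i}\frac{\partial^{\en\cdot i}}{\partial z^{\en\cdot i}}\Big(F(a_1,a_2,1|z)\log z+\losu(a_1,a_2,1|z)\Big),
$$
these being the images under $z^{\en\cdot i}\partial_z^{\en\cdot i}$ of the two standard solutions $F(a_1,a_2,1|z)$ and $F(a_1,a_2,1|z)\log z+\losu(a_1,a_2,1|z)$ of $L$.

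Next I would separate the holomorphic and logarithmic parts of $\ker L^{\en\cdot i}$. By Leibniz' rule $\partial_z^{\en\cdot i}(F\log z)$ equals $F^{(\en\cdot i)}\log z$ plus a sum of terms of the form $F^{(k)}\,\partial_z^{\en\cdot i-k}(\log z)$ with $k<\en\cdot i$, and $\partial_z^{j}(\log z)$ has a pole of order $j$ at $z=0$; multiplying through by $z^{\en\cdot i}$ makes these correction terms holomorphic, so $\ell_i=s_i\log z+(\text{holomorphic at }z=0)$, while $s_i$ is itself holomorphic and not identically zero. Thus $s_i$ spans the line of solutions of $L^{\en\cdot i}$ holomorphic at $z=0$. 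Now \eqref{8sept2015-1} makes $\Pi^0_i$ a holomorphic element of $\ker L^{\en\cdot i}$, whence $\Pi^0_i=c^0_i s_i$ — this is \eqref{nofa-1}. By \eqref{8sept2015-2}, $\Pi^0_i\log(z_1)+\Pi^1_i$ is a solution of $L^{\en\cdot i}$ which, $\Pi^1_i$ being holomorphic at $z_1=0$, has the shape $\Pi^0_i\log(z_1)+(\text{holomorphic})$; comparing the $\log(z_1)$ coefficient with that of $c^0_i\ell_i$ and using $\Pi^0_i=c^0_i s_i$ shows that $\Pi^0_i\log(z_1)+\Pi^1_i-c^0_i\ell_i$ is a holomorphic element of $\ker L^{\en\cdot i}$, hence equals $\tilde c^1_i\Pi^0_i$ for some constant $\tilde c^1_i$. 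Rearranging yields \eqref{nofa-2}, in fact with $c^1_i=c^0_i$.

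I expect the only real obstacle to be the first step: carrying the parameter shifts $a_j\mapsto a_j+m$ and the exponent shift $b_1\mapsto 1-m$ through the identities of \S\ref{hypergeometricsection} in the right order, so that $z^{\en\cdot i}\partial_z^{\en\cdot i}$ provably lands in $\ker L^{\en\cdot i}$ with exactly the claimed exponents, and checking that the composite is an isomorphism of solution spaces rather than merely a nonzero map — which is the point at which the hypothesis $a_1,a_2\notin\Z_{\le 0}$ is used. Once $\ker L^{\en\cdot i}$ is described in this way, the separation into holomorphic and logarithmic parts, and hence the identification of $\Pi^0_i$ and $\Pi^1_i$, are immediate.
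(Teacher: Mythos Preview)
Your proof is correct and takes the same approach as the paper, which merely refers to the identities of \S\ref{hypergeometricsection} without spelling out the argument; you supply the details the paper omits (identifying $L^{\en\cdot i}$ with the shifted hypergeometric operator $L(a_1,a_2;1-\en\cdot i)$, the intertwining property of $z^{\en\cdot i}\partial_z^{\en\cdot i}$, the injectivity check via $a_1,a_2\notin\Z_{\le 0}$, and the separation into holomorphic and logarithmic parts). Your side observation that necessarily $c^1_i=c^0_i$ is also correct and is confirmed by the explicit computation in the paper's main example.
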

The constants $c^0_i,c^i_1, \tilde c^1_i$ can be computed after applying the second operator $L_2$ to $\Pi^0,\Pi^1$. For the 
mathematical proof of Theorem \ref{maintheo} we do not need to compute them, however, for explicit verifications
of Theorem \ref{maintheo} one must compute them. 
From \eqref{nofa-1} it follows that $\Pi^0_i$ is in the field $\C(z,F,\theta F)$. Note that 
$$
\theta^2F=
\frac{a_0(a_1+a_2)z}{1-a_0z}\theta F+\frac{a_0a_1a_2z_1}{1-a_0z}F.
$$

\begin{prop}
\label{17oct2015-1}
 The quantities $W^{a,1}_i,\ \ a=1,2$ are in the field $\C(z, F, \theta F)$. 
\end{prop}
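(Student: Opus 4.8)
The plan is to express the zeroth-order term of each Wronskian $W^{a,b}$ in $z_2$ explicitly in terms of the hypergeometric data, and then to run an induction on $i$ using the recursive structure of the Picard--Fuchs system coming from the operator $L_1 = L - \en\theta_1\theta_2$. For $b=1$ we have, by the Proposition above, that $\Pi^0_i$ and $\Pi^0_i\log(z_1)+\Pi^1_i$ are (up to constants and the extra term $\tilde c^1_i\Pi^0_i$) high-order $z$-derivatives of the holomorphic and logarithmic solutions of the Gauss hypergeometric equation $L$. The key observation is that in the Wronskian
$$
W^{a,1} = \Pi^0\,\theta_a\Pi^1 - \Pi^1\,\theta_a\Pi^0,
$$
the logarithmic term $\Pi^0\log z_1$ occurring in $\Pi^1$ contributes, via $\theta_a(\Pi^0\log z_1) = (\theta_a\Pi^0)\log z_1 + \delta_{a,1}\Pi^0$, a piece proportional to $\delta_{a,1}(\Pi^0)^2$ plus $\log z_1$ times $W^{a,0}$-type terms which cancel against the $\Pi^1\theta_a\Pi^0$ contribution; hence $W^{a,1}$ is single-valued and its $z_2$-coefficients $W^{a,1}_i(z_1)$ are honest power series in $z_1$. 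First I would make this cancellation precise at each order $i$, writing $W^{a,1}_i$ as a bilinear expression in the quantities $\Pi^0_j,\ \Pi^1_j,\ \theta_1\Pi^0_j,\ \theta_1\Pi^1_j$ for $j\le i$ (and, for $a=2$, also in $\Pi^2_j$ via the relation $\theta_2\Pi^b = \sum_j (\cdots)z_2^j$).

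Next I would invoke the structural Propositions already established: \eqref{nofa-1}--\eqref{nofa-2} show $\Pi^0_j$ and $\Pi^0_j\log z_1 + \Pi^1_j$ lie in $\C(z,F,\theta F)$ once we note, as remarked after \eqref{nofa-2}, that $\theta^2 F$ and hence all higher $\theta$-derivatives of $F$ (and likewise of the logarithmic solution, modulo $\log z_1$ times $F$-derivatives) are already $\C(z,F,\theta F)$-combinations of $F$ and $\theta F$ — this is the reduction $\theta^2 F = \frac{a_0(a_1+a_2)z}{1-a_0z}\theta F + \frac{a_0a_1a_2 z}{1-a_0z}F$. Therefore $z^{\en i}\partial_z^{\en i}F \in \C(z,F,\theta F)$ for every $i$, and similarly the logarithmic solution's high derivatives are $\C(z,F,\theta F)$-linear combinations of $(F,\theta F)$ plus $\log z_1\cdot(\C(z,F,\theta F)$-combinations of $F,\theta F)$. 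Substituting these into the bilinear formula for $W^{a,1}_i$, the pure-$\log z_1$ and $(\log z_1)^2$ contributions must cancel by single-valuedness of $W^{a,1}$, leaving $W^{a,1}_i \in \C(z,F,\theta F)$. The constants $c^0_j, c^1_j, \tilde c^1_j$ are irrelevant for membership in the field, so they need not be computed.

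For the case $a=2$ there is an extra wrinkle: $\theta_2\Pi^b$ is not simply read off, so one must use the Proposition giving $L^{\en i}\Pi^2_i = \en\,\theta_1\Pi^0_i$ together with the Wronskian formula $W$ of $L^{m}$ (the Proposition computing it as $(1-a_0z)^{-a_1-a_2}(z/(1-a_0z))^m$) to solve for $\Pi^2_i$ by variation of parameters; this expresses $\Pi^2_i$ as an integral of $\C(z,F,\theta F)$-data against the Wronskian, and one checks the integral closes up inside $\C(z,F,\theta F)$ because $\theta$ acts on that field and the relevant antiderivatives are forced to be algebraic over $\C(z,F,\theta F)$ by the differential equation they satisfy. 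The main obstacle I anticipate is exactly this last point — verifying that solving the inhomogeneous equation for $\Pi^2_i$ (and, more generally, any antiderivative arising in the variation-of-parameters step) does not leave the field $\C(z,F,\theta F)$ but stays inside it, i.e. that no genuinely new transcendental function is introduced. This should follow because $\C(z,F,\theta F)$ is a rank-$2$ Picard--Vessiot-type extension closed under $\theta$ and the inhomogeneous term lies in it, but making the closure argument airtight is the delicate part; everything else is bookkeeping with the explicit recursions \eqref{8sept2015-1}--\eqref{8sept2015-3}.
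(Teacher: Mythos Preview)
Your argument has a real gap at the step where you claim that ``the logarithmic solution's high derivatives are $\C(z,F,\theta F)$-linear combinations of $(F,\theta F)$ plus $\log z_1\cdot(\C(z,F,\theta F)$-combinations of $F,\theta F)$.'' This is false. Writing $\tilde F=F\log z+\losu$ for the logarithmic solution, any $\theta$-derivative of $\tilde F$ is a $\C(z)$-combination of $\tilde F$ and $\theta\tilde F$, hence of the form $(\text{something in }\C(z,F,\theta F))\cdot\log z+(\text{something in }\C(z,\losu,\theta \losu,F))$. The non-log part genuinely involves $\losu$, and $\losu\notin\C(z,F,\theta F)$: indeed $\losu/F=\log q-\log z$, whose $\theta$-derivative lies in $\C(z,F)$ but which is itself a new transcendental. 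So after your cancellation of $\log z_1$ and $(\log z_1)^2$ contributions, what remains lives a priori only in $\C(z,F,\theta F,\losu,\theta \losu)$, not in $\C(z,F,\theta F)$. The missing observation is that the surviving $\losu$-terms always organize into multiples of the Gauss Wronskian $F\theta\tilde F-\tilde F\theta F=(1-a_0z)^{-a_1-a_2}$, which is rational; but you do not argue this, and it is exactly the content that needs proof.

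The paper avoids tracking $\losu$ altogether by rewriting $\tilde F=F\log q$ and hence $\Pi^0_i\log z+\Pi^1_i=\Pi^0_i\log q+A_i$. Because $\theta\log q=(1-a_0z)^{-a_1-a_2}/F^2\in\C(z,F,\theta F)$ by \eqref{bony}, differentiating $F\log q$ produces $\log q$ times field elements plus field elements, so each $A_i\in\C(z,F,\theta F)$. In the Wronskian the $\log q$ factor then cancels exactly as your $\log z$ did, and one is done. This is the substitution you are missing.

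Separately, your treatment of $a=2$ is off target: $W^{2,1}$ involves $\theta_2\Pi^0$ and $\theta_2\Pi^1$, which are obtained simply by $\theta_2\big(\sum_i(\cdot)z_2^i\big)=\sum_i i(\cdot)z_2^i$; no $\Pi^2_i$ and no variation of parameters enters. The inhomogeneous equation $L^{\en i}\Pi^2_i=\en\theta_1\Pi^0_i$ and the integrals you discuss are relevant to $W^{a,2}$, which is the next proposition, not this one.
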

\begin{proof}
In \eqref{nofa-2} we use 
$$
F(a_1, a_2,1|z)\ln(z)+\losu(a_1,a_2,1|z)=F(a_1, a_2,1|z)\log(q)
$$
and 
\begin{equation}
\label{bony}
\frac{\partial \log(q)}{\partial z}=\frac{(1-a_0z)^{-a_1-a_2}z^{-1}}{F^2}
\end{equation}
and we write
$$
\Pi^0_i\log(z)+\Pi^1_i=\Pi^0_i\log(q)+A_i
$$
where $A_0=0$. We claim that $A_i$ is  in $\C(z,F,\theta F)$.  We can see this in two different ways. 
First, by using \eqref{nofa-2} and \eqref{bony},
second, by applying  the second differential
operator $L_2$ on $\Pi^0_i\log q+A_i$ which gives a  recursion for the $A_i$'s fixing them without ambiguity.
\end{proof}

\subsection{Nonhomogeneous differential equations}
We would like to solve the non-homogeneous equation 
\eqref{8sept2015-3}. In general, if we are given a second order linear differential operator $L=\theta^2+p(z)\theta+q(z)$ with two linearly independent solutions $y_1,y_2$, then a solution of the non-homogeneous differential equation $L=g(z)$ is given by 
$u_1y_1+u_2y_2$, where
$$
u_1=-\int \frac{y_2 g}{W(y_1,y_2)}dz,\ \ \ u_2:=\int \frac{y_1 g}{W(y_1,y_2)}dz
$$
and $W(y_1,y_2)=y_1\theta y_2-y_2\theta y_1=e^{-\int p(x)}$ is the Wronskian. We apply this to the non-homogeneous differential
equation \eqref{8sept2015-3} and obtain 
\begin{eqnarray*}
y_1(u_1+\frac{y_2}{y_1}u_2) &=& \en\cdot \Pi^0_i
\left ( 
-\int \tilde \Pi_i^1\theta\Pi_i^0(1-a_0z)^{a_1+a_2-1}(\frac{z}{1-a_0z})^{-\en\cdot i}\frac{dz}{z} \right. \\
& & \left.
+\frac{\tilde \Pi_i^1}{\Pi_i^0}\int \Pi_i^0\theta\Pi_i^0(1-a_0z)^{a_1+a_2-1}(\frac{z}{1-a_0z})^{-\en\cdot i}\frac{dz}{z}   
\right)
\end{eqnarray*}
where $\tilde \Pi^1_i$ is a second solution of $L^{\en i}=0$. Note that by \eqref{8sept2015-1} a first solution
is given by $\Pi^0_i$.
For 
\begin{equation}
a_1+a_2=1
\end{equation}
and $i=0$ we can solve these integrals and we get
$$
\Pi^2_0=-\frac{\en}{2}\Pi_0^0\log(\frac{1-a_0z}{z}). 
$$
This is defined up to addition of a linear combination of $\Pi^0_0$ and $\tilde \Pi_0^1$. We know that the 
original $\Pi^2_0$ arising from the solution $\Pi^2$ of $\PF_\en$ is holomorphic at $z_1=0$. 
Therefore, we add a multiple of $\tilde \Pi_0^1$ to the expression above and arrive at
\begin{equation}
\label{11sept2015}
\Pi^2_0=-\frac{\en}{2}\Pi_0^0\log(q\frac{1-a_0z}{z}),
\end{equation}
where $q=q_1\mid_{z_2=0}$. Note that for $i=0$, $\tilde \Pi^0_i$ is the logarithmic solution of the Gauss hypergeometric equation.   
We can add a multiple of $\Pi^0$ to $\Pi^2$ and assume 
that $\Pi^2_0$ is divisbale by $z$. 
In this way the formula of  $\Pi^2_0$  in \eqref{11sept2015} becomes unique. 
\begin{prop}
\label{17oct2015-2}
 The quantities $W^{a,2}_i,\ \ a=1,2$ are in the field $\C(z, F, \theta F)$. 
\end{prop}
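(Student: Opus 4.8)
The plan is to establish, term by term in the $z_2$-expansion, that $W^{a,2}_i\in\C(z,F,\theta F)$ for $a=1,2$, by the same mechanism that underlies Proposition \ref{17oct2015-1} for $\Pi^1$, now feeding in the explicit formula \eqref{11sept2015} for $\Pi^2_0$. First I would note that $\C(z,F,\theta F)$ is closed under $\theta:=\theta_1$: since $\theta z=z$ and $\theta^2F=\frac{a_0(a_1+a_2)z}{1-a_0z}\theta F+\frac{a_0a_1a_2z}{1-a_0z}F$ lies in the field, so does every $\theta^mF$, hence every polynomial in them; together with \eqref{nofa-1} this gives $\theta^m\Pi^0_i\in\C(z,F,\theta F)$ for all $i,m$. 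Expanding the Wronskians in $z_2$ and using $\theta_1\log z_2=0$, $\theta_2\log z_2=1$, the logarithmic parts cancel and one is left with the finite sums
$$
W^{1,2}_i=\sum_{j+k=i}\big(\Pi^0_j\,\theta\Pi^2_k-\Pi^2_k\,\theta\Pi^0_j\big),\qquad
W^{2,2}_i=\sum_{j+k=i}\Pi^0_j\Pi^0_k+\sum_{j+k=i}(k-j)\Pi^0_j\Pi^2_k ,
$$
so everything reduces to controlling the $\Pi^2_k$ modulo $\C(z,F,\theta F)$.

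The heart of the argument is the claim that, writing $\ell:=\log\!\big(q\,\tfrac{1-a_0z}{z}\big)$,
$$
\Pi^2_i=-\tfrac{\en}{2}\,\Pi^0_i\,\ell+B_i,\qquad B_i\in\C(z,F,\theta F),
$$
to be proved by induction on $i$, the case $i=0$ being exactly \eqref{11sept2015} (so $B_0=0$). For the inductive step I would apply the second operator $L_2$ to $\Pi^2=\Pi^0\log z_2+\sum_i\Pi^2_iz_2^i$. Since $L_2\Pi^0=0$, the $\log z_2$-part of $L_2(\Pi^0\log z_2)$ vanishes and the remainder is a power series in $z_2$ whose coefficients are obtained by applying polynomials in $\theta$ (and integer multiplications coming from $\theta_2$) to the $\Pi^0_i$, hence lie in $\C(z,F,\theta F)$. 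Reading off the coefficient of $z_2^i$ then gives a recursion
$$
\Pi^2_i=\frac{(-1)^{\en}}{i^{\en}}\,\mathcal D_i\big[\Pi^2_{i-1}\big]+r_i,\qquad \mathcal D_i:=\prod_{l=0}^{\en-1}\big(\en(i-1)-\theta+l\big),\qquad r_i\in\C(z,F,\theta F),
$$
while the same recursion for $\Pi^0$ reads $\mathcal D_i[\Pi^0_{i-1}]=(-1)^{\en}i^{\en}\Pi^0_i$. Expanding $\mathcal D_i[\Pi^0_{i-1}\ell]$ by the Leibniz rule for differential operators, the $m=0$ term is $\mathcal D_i[\Pi^0_{i-1}]\,\ell=(-1)^{\en}i^{\en}\Pi^0_i\,\ell$ and every $m\ge1$ term is $\tfrac1{m!}\mathcal D_i^{(m)}[\Pi^0_{i-1}]\cdot\theta^m\ell$ with $\theta^m\ell\in\C(z,F,\theta F)$; this is where $a_1+a_2=1$ is used, since by \eqref{bony} it makes $\theta\ell=\frac{1}{(1-a_0z)F^2}-\frac{1}{1-a_0z}$ rational in $z$ and $F$ (otherwise one meets the non-rational factor $(1-a_0z)^{-a_1-a_2}$). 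Substituting $\Pi^2_{i-1}=-\tfrac\en2\Pi^0_{i-1}\ell+B_{i-1}$ into the recursion and using the inductive hypothesis then yields $\Pi^2_i=-\tfrac\en2\Pi^0_i\ell+B_i$ with $B_i\in\C(z,F,\theta F)$.

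Granting this, the conclusion is immediate. Substituting $\Pi^2_k=-\tfrac\en2\Pi^0_k\ell+B_k$ into the two displayed sums, every term involving a $B_k$ is manifestly in $\C(z,F,\theta F)$; the term $\ell$ survives only through $\ell\cdot\sum_{j+k=i}\big(\Pi^0_j\theta\Pi^0_k-\Pi^0_k\theta\Pi^0_j\big)$ in $W^{1,2}_i$ and $\ell\cdot\sum_{j+k=i}(k-j)\Pi^0_j\Pi^0_k$ in $W^{2,2}_i$, and both of these sums vanish because the summand is antisymmetric under the involution $j\leftrightarrow k$ of the symmetric index set $\{j+k=i\}$. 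What is left is built from $\theta\ell$, the $\Pi^0_j$, and the $B_j$, all of which lie in $\C(z,F,\theta F)$ (note $\theta\ell\in\C(z,F)$); hence $W^{1,2}_i,W^{2,2}_i\in\C(z,F,\theta F)$.

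The step I expect to be the main obstacle is the inductive claim: one must check carefully that, after removing the $\log z_2$-part $(L_2\Pi^0)\log z_2=0$, the remainder of $L_2(\Pi^0\log z_2)$ is genuinely a $z_2$-power series with coefficients in $\C(z,F,\theta F)$, and that the $m=0$ term of the Leibniz expansion of $\mathcal D_i[\Pi^0_{i-1}\ell]$ propagates the transcendental part with precisely the constant $-\tfrac\en2$ needed for the induction to close. This is the $\Pi^2$-analogue of the bookkeeping behind the $\Pi^1$-case in the proof of Proposition \ref{17oct2015-1}, the only genuinely new input being the base case \eqref{11sept2015}, which rests on $a_1+a_2=1$. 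Alternatively one could argue directly from the nonhomogeneous equation \eqref{8sept2015-3} via variation of parameters, evaluating the resulting integrals in closed form as in the derivation of \eqref{11sept2015}; there too $a_1+a_2=1$ is exactly what renders those integrals elementary.
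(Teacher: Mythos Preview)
Your proposal is correct and follows the same approach as the paper: write $\Pi^2_i=-\tfrac{\en}{2}\Pi^0_i\ell+B_i$ with $\ell=\log\big(q\,\tfrac{1-a_0z}{z}\big)$, use $L_2$ to get a recursion showing $B_i\in\C(z,F,\theta F)$, and conclude for the Wronskian coefficients; the crucial input in both is that $\theta\ell\in\C(z,F,\theta F)$, which rests on $a_1+a_2=1$ via \eqref{bony}.

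The only presentational difference is that the paper packages the decomposition globally as $\Pi^2=\Pi^0\log(\tilde z_2)+B$ with $\tilde z_2:=z_2 q^{-\en/2}\big(\tfrac{1-a_0z}{z}\big)^{-\en/2}$ and $B=\sum_iB_iz_2^i$. Since $\theta_2\log\tilde z_2=1$ and $\theta_1\log\tilde z_2\in\C(z,F,\theta F)$, the Wronskian then reads $W^{a,2}=(\Pi^0)^2\,\theta_a\log\tilde z_2+\Pi^0\,\theta_aB-B\,\theta_a\Pi^0$, and the $\log$ contribution visibly lies in the field without any term-by-term cancellation. Your antisymmetry argument (the sums $\sum_{j+k=i}(\Pi^0_j\theta\Pi^0_k-\Pi^0_k\theta\Pi^0_j)$ and $\sum_{j+k=i}(k-j)\Pi^0_j\Pi^0_k$ vanish) is the coefficient-by-coefficient unpacking of exactly this cancellation, so the two arguments are equivalent; the paper's packaging is just a bit shorter.
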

\begin{proof}
Imitating the case of $\Pi^1_i$'s, we write
\begin{equation}
\label{dakhelhavapeima}
\Pi^0_i\log z_2+\Pi^2_i=\Pi^0_i \log\left( z_2q^{-\frac{\en}{2}} (\frac{1-a_0z_1}{z_1})^{{-\frac{\en}{2}} }        \right)+B_i
\end{equation}
After applying the second differential operator $L_2$ on the above expression 
we get a recursion for the $B_i$'s which shows that they are 
in the field $\C(z,F,\theta F)$. If we denote by $\tilde z_2$ the expression inside the logarithm in \eqref{dakhelhavapeima},
then using \eqref{bony} we have $\theta_1\log(\tilde z_2)\in \C(z,F,\theta F)$ and $\theta_2\log(\tilde z_2)=1$. 
Note that $B_0=0$ and hence it is in $\C(z, F,\theta F)$. This is the main reason for defining 
the logarithmic expression \eqref{dakhelhavapeima}. 
\end{proof}

\subsection{Differential field} 
\label{sec:difffield}
The field $\CYM_\en$ of Calabi-Yau modular forms defined in the introduction 
is by definition closed under derivations $\theta_1,\theta_2$. We have
$$
\begin{pmatrix}
 \frac{\partial z_1}{\partial \tau_1} &  \frac{\partial z_1}{\partial \tau_2}  \\
 \frac{\partial z_2}{\partial \tau_1} &  \frac{\partial z_2}{\partial \tau_2}  
\end{pmatrix}=
\frac{1}{\frac{\partial \tau_1}{\partial z_1}\frac{\partial \tau_2}{\partial z_2} - 
\frac{\partial \tau_1}{\partial z_2}\frac{\partial \tau_2}{\partial z_1} }
\begin{pmatrix}
 \frac{\partial \tau_2}{\partial z_2} &  -\frac{\partial \tau_1}{\partial z_2}  \\
 -\frac{\partial \tau_2}{\partial z_1} &  \frac{\partial \tau_1}{\partial z_1}  
\end{pmatrix}
$$
and therefore is invariant under 
\begin{eqnarray}
 \label{icerm-1}
\frac{\partial}{\partial \tau_2} &=& 
q_2\frac{\partial}{\partial q_2}=
\frac{(\Pi^0)^2}{W^{11}W^{22}-W^{21}W^{12}}
\left( 
-W^{21}\theta_1+W^{11}\theta_2
\right)\\
\label{icerm-2}
\frac{\partial}{\partial \tau_1} &=& q_1\frac{\partial}{\partial q_1}=
\frac{(\Pi^0)^2}{W^{11}W^{22}-W^{21}W^{12}}
\left( 
-W^{12}\theta_2+W^{22}\theta_1
\right)
\end{eqnarray}
This is still not enough to prove Theorem \ref{maintheo}. Proposition \ref{17oct2015-1} and
Proposition \ref{17oct2015-2} imply that the coefficents of the $z_2$-expansion
of elements of $\CYM_\en$ are in the field $\C(z,F,\theta F)$. 
\begin{prop}\label{11/09/2015}
We have 
\begin{equation}
 \label{NYtoRio}
 \left( \frac{1-a_0z_1}{z_1}\right )^{\frac{\en}{2}}  \frac{q_2 q_1^{\frac{\en}{2}}}{z_2}= 1+\sum_{i=1}^\infty C_iz_2^i \\
 \end{equation}
 and $C_i\in \C(z,F,\theta F)$.
\end{prop}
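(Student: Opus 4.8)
The plan is to read off the identity directly from the definition of the mirror map, by computing $\log\!\left(q_2q_1^{\en/2}\right)$ and tracking which logarithmic terms survive. Since $\tau_a=\Pi^a/\Pi^0$ and $q_a=e^{\tau_a}$, one has
\[
\log\!\left(q_2q_1^{\en/2}\right)=\tau_2+\tfrac{\en}{2}\tau_1=\frac{\Pi^2+\tfrac{\en}{2}\Pi^1}{\Pi^0},
\]
so the whole question is the structure of the $z_2$-expansion of $\Pi^2+\tfrac{\en}{2}\Pi^1$ after dividing by $\Pi^0$. First I would repackage the coefficientwise rewritings already obtained in the proofs of Proposition \ref{17oct2015-1} and Proposition \ref{17oct2015-2}: summing $\Pi^0_i\log z_1+\Pi^1_i=\Pi^0_i\log q+A_i$ over $i$ gives $\Pi^1=\Pi^0\log q+\sum_{i\ge 1}A_iz_2^i$, and summing \eqref{dakhelhavapeima} gives $\Pi^2=\Pi^0\log\!\left(z_2\,q^{-\en/2}\left(\tfrac{1-a_0z_1}{z_1}\right)^{-\en/2}\right)+\sum_{i\ge 1}B_iz_2^i$, where $A_0=B_0=0$ and all $A_i,B_i$ lie in $\C(z,F,\theta F)$.

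Adding these, the two occurrences of $\log q$ — the $-\tfrac{\en}{2}\log q$ hidden inside $\Pi^2$ via \eqref{11sept2015} and the $+\tfrac{\en}{2}\log q$ coming from $\tfrac{\en}{2}\Pi^1$ — cancel exactly; this cancellation is precisely why the relevant monomial is $q_2q_1^{\en/2}$ rather than any other power of $q_1$, and it is also where the standing hypothesis $a_1+a_2=1$ enters, since that is what makes the closed form \eqref{11sept2015} available. What is left is
\[
\Pi^2+\tfrac{\en}{2}\Pi^1=\Pi^0\log\!\left(z_2\left(\tfrac{1-a_0z_1}{z_1}\right)^{-\en/2}\right)+S,\qquad S:=\sum_{i\ge 1}\left(B_i+\tfrac{\en}{2}A_i\right)z_2^i .
\]
Dividing by $\Pi^0$ and exponentiating then gives
\[
\left(\tfrac{1-a_0z_1}{z_1}\right)^{\en/2}\frac{q_2q_1^{\en/2}}{z_2}=\exp\!\left(S/\Pi^0\right),
\]
and it remains only to check that the right-hand side is a power series in $z_2$ with constant term $1$ and all other coefficients in $\C(z,F,\theta F)$.

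For that final step I would argue purely formally. The leading coefficient $\Pi^0_0$ of $\Pi^0$ is the hypergeometric solution $F$, a nonzero element of $\C(z,F,\theta F)$, and by \eqref{nofa-1} every coefficient $\Pi^0_i$ lies in $\C(z,F,\theta F)$, using the second-order reduction that writes $\theta^2F$ — and hence every higher $z$-derivative of $F$ — as a $\C(z)$-combination of $F$ and $\theta F$; therefore $1/\Pi^0$ is a power series in $z_2$ over the field $\C(z,F,\theta F)$, so $S/\Pi^0$ is such a series with zero constant term. Exponentiating a $z_2$-series with zero constant term over the $\Q$-algebra $\C(z,F,\theta F)$ yields $1+\sum_{i\ge 1}C_iz_2^i$ in which each $C_i$ is a $\Q$-polynomial in finitely many of the coefficients of $S/\Pi^0$, hence lies in $\C(z,F,\theta F)$, which is the claim. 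The one place demanding care is the bookkeeping of the three logarithms $\log z_1$, $\log z_2$, $\log q$ in the middle paragraph: one must verify that after forming $\Pi^2+\tfrac{\en}{2}\Pi^1$ and dividing by $\Pi^0$, every logarithmic term other than the explicit $\log\!\left(z_2(\tfrac{1-a_0z_1}{z_1})^{-\en/2}\right)$ has cancelled, leaving exactly the field-valued remainder $S/\Pi^0$. Once that cancellation is checked, the rest is routine manipulation of formal power series.
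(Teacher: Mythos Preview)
Your proof is correct and takes a genuinely different route from the paper's. The paper argues by differentiation: after noting from \eqref{11sept2015} that the quantity $X$ on the left of \eqref{NYtoRio} has constant term $1$, it computes
\[
\theta_2\log X=\frac{W^{22}+\tfrac{\en}{2}W^{21}}{(\Pi^0)^2}-1
\]
and reads off a first-order recursion for the $C_i$ whose inputs are the $z_2$-coefficients of $W^{21}$, $W^{22}$ and $(\Pi^0)^2$, already known to lie in $\C(z,F,\theta F)$ by Propositions~\ref{17oct2015-1} and~\ref{17oct2015-2}. You instead work directly at the level of the periods: you reassemble $\Pi^1$ and $\Pi^2$ from the decompositions $\Pi^0_i\log q+A_i$ and \eqref{dakhelhavapeima} established in the proofs of those same propositions, observe that the $\log q$ contributions in $\Pi^2+\tfrac{\en}{2}\Pi^1$ cancel (this is where $a_1+a_2=1$ is used, via \eqref{11sept2015} and hence $B_0=0$), and then exponentiate the explicit remainder $S/\Pi^0$. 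Both arguments rest on the same prior input; the paper's recursion is tailored for the explicit coefficient extraction carried out later in the Main Example, while your closed formula $X=\exp(S/\Pi^0)$ makes the structure of the answer transparent and bypasses the Wronskian bookkeeping entirely.
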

 Note that the quantity in \eqref{NYtoRio} does not belong to $\CYM_\en$, however, its $z_2$-expansion is similar to the $z_2$-expansion of the elements of $\CYM_\en$.
 \begin{proof}
 It follows from \eqref{11sept2015} that the quantity $X$ in \eqref{NYtoRio} starts with $1$. We have
 $$
 \partial_{2}X=X\partial_{2}\cdot \log(X)=X\cdot\left (\frac{W^{22}+\frac{\en}{2}W^{21} }{(\Pi^0)^2}-\frac{1}{z_2}\right )
 $$
 Substituting the left hand side of \eqref{NYtoRio} in the $X$ of the above equality we get a recursion of $C_i$'s
 which proves the Proposition.
 \end{proof}
 Becuase of Proposition \ref{11/09/2015}, it is natural to add the quantities 
 \begin{equation}
  \label{11S2015}
 \left( \frac{1-a_0z_1}{z_1}\right )^{\frac{\en}{2}},\ \ 
 \hbox{  and  } \frac{q_2 q_1^{\frac{\en}{2}}}{z_2}
 \end{equation}
 in \eqref{NYtoRio} to $\CYM_\en$ and 
 define $\check \CYM_\en$ to be the field generated by the elements of $\CYM_\en$ and \eqref{11S2015}. Note that
 for $\en$ even, the first element is already in $\CYM_\en$.
 \begin{prop}
 The field $\check \CYM_\en$ is invariant under the
 derivation $q_1^{-\frac{\en }{2}}\frac{\partial}{\partial q_2}$.
\end{prop}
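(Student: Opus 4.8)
The plan is to reduce the statement to the two facts already proved: that $\check\CYM_\en$ contains the generators of $\CYM_\en$ together with the two quantities in \eqref{11S2015}, and that $\CYM_\en$ is closed under $\theta_1$ and $\theta_2$. Write $D := q_1^{-\frac{\en}{2}}\frac{\partial}{\partial q_2} = q_1^{-\frac{\en}{2}} q_2^{-1}\,\theta_2 \cdot q_2 = q_1^{-\frac{\en}{2}}(\theta_2)$ acting after passing to $q$-coordinates; more precisely, in the $(z_1,z_2)$-coordinates $D$ is the composition of the (known) derivation $q_2\partial_{q_2}=\partial/\partial\tau_2$ of \eqref{icerm-1}, which preserves $\CYM_\en$, with multiplication by the scalar $q_1^{-\frac{\en}{2}}q_2^{-1}$. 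Since $\check\CYM_\en$ is a field, it suffices to show two things: (i) the multiplier $q_1^{-\frac{\en}{2}}q_2^{-1}$ lies in $\check\CYM_\en$, and (ii) $\check\CYM_\en$ is closed under $\partial/\partial\tau_2 = q_2\partial_{q_2}$.

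For (i): by definition $\check\CYM_\en$ contains $\frac{q_2 q_1^{\en/2}}{z_2}$ and contains $z_2$ and $\left(\frac{1-a_0z_1}{z_1}\right)^{\en/2}$ (hence also $z_1$ from $\CYM_\en$), so it contains the reciprocal $\frac{z_2}{q_2 q_1^{\en/2}}$, and multiplying by $z_2^{-1}$ gives $q_1^{-\en/2}q_2^{-1}\in\check\CYM_\en$. For (ii): $\CYM_\en\subset\check\CYM_\en$ is stable under $q_2\partial_{q_2}$ by \eqref{icerm-1} since that derivation is an explicit $\C(z,F,\theta F)$-combination — really a $\CYM_\en$-combination — of $\theta_1$ and $\theta_2$, both of which preserve $\CYM_\en$; so it remains only to check that $q_2\partial_{q_2}$ sends each of the two new generators \eqref{11S2015} into $\check\CYM_\en$. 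The first generator $\left(\frac{1-a_0z_1}{z_1}\right)^{\en/2}$ depends only on $z_1$, so $q_2\partial_{q_2}$ applied to it equals $\left(q_2\partial_{q_2}z_1\right)\cdot\partial_{z_1}\!\left(\tfrac{1-a_0z_1}{z_1}\right)^{\en/2}$; the first factor is $\frac{\partial z_1}{\partial\tau_2}\in\CYM_\en$ by the Jacobian formula in \S\ref{sec:difffield}, and the second is $-\frac{\en}{2}\frac{1}{z_1}\left(\frac{1-a_0z_1}{z_1}\right)^{\en/2}\in\check\CYM_\en$. For the second generator, writing $X:=\frac{q_2 q_1^{\en/2}}{z_2}$ we have $q_2\partial_{q_2}\log X = 1 + q_2\partial_{q_2}\log q_1 - q_2\partial_{q_2}\log z_2$; here $q_2\partial_{q_2}\log q_1 = \partial_{\tau_2}\tau_1$ and $q_2\partial_{q_2}\log z_2 = \frac{1}{z_2}\frac{\partial z_2}{\partial\tau_2}$, both manifestly in $\check\CYM_\en$ by the formulas of \S\ref{sec:difffield} (using that $\frac{z_2}{q_2q_1^{\en/2}}\in\check\CYM_\en$ to clear the apparent pole at $z_2=0$), so $q_2\partial_{q_2}X = X\cdot(q_2\partial_{q_2}\log X)\in\check\CYM_\en$.

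Combining (i) and (ii): for $f\in\check\CYM_\en$ we have $Df = \left(q_1^{-\en/2}q_2^{-1}\right)\cdot\left(q_2\partial_{q_2}f\right)$, a product of an element of $\check\CYM_\en$ with an element of $\check\CYM_\en$, hence in $\check\CYM_\en$. The one point requiring care — and the only genuine obstacle — is the bookkeeping of apparent poles at $z_2=0$: several of the intermediate expressions ($q_2\partial_{q_2}\log z_2$, and $\partial_2 X$ in the proof of Proposition \ref{11/09/2015}) individually look singular along $z_2=0$, and one must verify that the singular parts cancel using precisely the membership $\frac{q_2q_1^{\en/2}}{z_2}\in\check\CYM_\en$ (equivalently, the recursion for the $C_i$ in \eqref{NYtoRio}); once that cancellation is recorded the rest is a formal computation with the Jacobian identity of \S\ref{sec:difffield}.
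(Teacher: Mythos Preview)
Your proof is correct and follows essentially the same route as the paper. Both arguments factor $q_1^{-\en/2}\partial_{q_2}$ as the scalar $\big(\tfrac{q_2q_1^{\en/2}}{z_2}\big)^{-1}\in\check\CYM_\en$ times $\tfrac{1}{z_2}\cdot q_2\partial_{q_2}$, and then invoke the expression \eqref{icerm-1} of $q_2\partial_{q_2}$ as a $\CYM_\en$-linear combination of $\theta_1,\theta_2$. The paper compresses this into a single displayed formula and leaves the verification that $\check\CYM_\en$ is $\theta_1,\theta_2$-stable (equivalently, that $q_2\partial_{q_2}$ sends the two new generators \eqref{11S2015} back into $\check\CYM_\en$) implicit; you carry out that verification explicitly, which is fine and arguably more complete.

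Two minor remarks. First, your expression for $\partial_{z_1}\big(\tfrac{1-a_0z_1}{z_1}\big)^{\en/2}$ is off by a factor of $(1-a_0z_1)^{-1}$, but this does not affect the argument since the only claim is membership in $\check\CYM_\en$. Second, the worry about ``apparent poles at $z_2=0$'' is misplaced for the proposition as stated: $\check\CYM_\en$ is a \emph{field} containing $z_2^{-1}$, so membership imposes no regularity condition along $z_2=0$. Regularity at $z_2=0$ matters for the proof of Theorem~\ref{maintheo}, not here; in fact the paper's closing ``Note that'' about the restrictions $|_{z_2=0}$ lying in $\C(z,F,\theta F)$ is a preparation for that later application rather than a step in the invariance proof.
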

The field $\CYM_\en$ is of course not invariant under  $\frac{\partial}{\partial q_i}$. It is 
invariant under the operator $q_2\frac{\partial}{\partial q_2}$, however, this operator cannot be used 
in order to compute the $q_2$-coeffiecients of an element in $\CYM_\en$.
\begin{proof}
 The proof follows from 
 $$
 q_1^{-\frac{\en }{2}}\frac{\partial}{\partial q_2}=
(\frac{q_2 q_1^{\frac{\en}{2}}}{z_2})^{-1}
 \frac{(\Pi^0)^2}{z_2(W^{11}W^{22}-W^{21}W^{12})}
\left( 
-W^{21}\theta_1+W^{11}\theta_2
\right)
 $$
Note that 
 \begin{eqnarray*}
 \left.
 \frac{(\Pi^0)^2W^{21}}{z_2(W^{11}W^{22}-W^{21}W^{12})}
 \right|_{z_2=0} & & 
\left.
 \frac{(\Pi^0)^2W^{11}}{(W^{11}W^{22}-W^{21}W^{12})}
 \right|_{z_2=0}
 \end{eqnarray*}
are in the field $\C(z,F,\theta F)$. 
\end{proof}

\section{Yukawa couplings for elliptically fibred Calabi-Yau fourfolds}
In this section we will focus on the class of elliptically fibred Calabi-Yau fourfolds. 
We will review mirror symmetry and proceed to compute 4-point functions which are also called Yukawa-couplings. 
Using the results from the previous sections we can express all Yukawa-couplings in terms of modular forms. 
This will provide the first example of a Calabi-Yau fourfold whose A-model correlation functions are expressed in 
terms of modular forms.
Here will will review how to compute periods of a Calabi-Yau fourfold $X$ and relate these to genus $0$ Gromov-Witten potentials of the mirror Calabi-Yau fourfold $\widetilde X$ where we will follow the references 
\cite{Greene:1993vm,Klemm:1996ts,Mayr:1996sh,Grimm:2009ef}. 

\subsection{$A$-side of the Mirror Symmetry}
\label{A-model}
In the case of fourfolds, in order to obtain zero virtual dimension for the moduli space of holomorphic maps, 
one needs to intersect the holomorphic curves with an extra four-cycle $\gamma$ in the Calabi-Yau $\widetilde X$. 
$\gamma$ can be any homology class Poincare dual to a cohomology class in the primary vertical subspace 
$H_V^{2,2}(\widetilde X)$. Here, for a Calabi-Yau $d$-fold $\widetilde X$,  $H_V^{k,k}(\widetilde X)$ consists of elements of the form
\begin{equation}
	\cO^{(k)}_{a} = 
	\sum_{i_1,\cdots,i_k} \alpha^{i_1,\cdots,i_k}_{a} J_{i_1} \wedge \ldots \wedge J_{i_k} \in H^{k,k}(\widetilde X),
\end{equation}
where $a=1,2\ldots$ enumerates a class of elements of $H_V^{k,k}(\widetilde X)$.
In the language of topological string theory the cohomology elements $\cO^{(k)}_{a}$ are also called degree 
$k$ A model operators. Among their non-zero correlation functions are the two-point functions
\begin{equation} \label{eq:metric}
	\eta_{ab}^{(k)} = \langle \mathcal{O}^{(k)}_{a} \mathcal{O}^{(d-k)}_{b}\rangle 
	= \int_{X} \mathcal{O}^{(k)}_{a} \wedge \mathcal{O}^{(d-k)}_{b},
\end{equation}
which do not receive any instanton corrections.
In mathematical terms, all these quantities are still integer valued and no $q$-expansion is attached.
However, the following three- and four-point functions do receive worldsheet instanton corrections
\begin{equation}
	C^{(1,1,2)}_{ab\gamma} = \langle \mathcal{O}^{(1)}_{a} \cO^{(1)}_{b} \cO^{(2)}_{\gamma}\rangle, \quad C^{(1,1,1,1)}_{abcd} =\langle \cO^{(1)}_a \cO^{(1)}_b \cO^{(1)}_c \cO^{(1)}_d\rangle,
\end{equation} 
and hence depend on the $q$-parameter. 
The genus $0$ Gromov-Witten potential is defined by
\begin{equation} \label{GWpot}
	F^0(\gamma) = \sum_{\beta \in H_2(X,\mathbb{Z})} N^0_{\beta}(\gamma) q^{\beta}, \quad \partial_{\tau_a} \partial_{\tau_b} F^0(\gamma) = C^{(1,1,2)}_{ab\gamma},
\end{equation}
where $N^0_{\beta}(\gamma)$ are the Gromov-Witten invariants which are in general rational and one has $q^{\beta} = \prod_{i=1}^{h^{1,1}} e^{2\pi i \tau_i \beta_i}$, see \cite{Klemm:2007in}. The potential (\ref{GWpot}) also admits an expansion in terms of integer invariants $n^0_{\beta}(\gamma) \in \mathbb{Z}$ as follows.
\begin{equation} \label{prepotential}
	F^0(\gamma) = \frac{1}{2} C^{0(1,1,2)}_{ab\gamma} \tau_a \tau_b + b^0_{a\gamma} \tau_a + a^0_{\gamma} + \sum_{\beta > 0} n^0_{\beta}(\gamma) \textrm{Li}_{2}(q^{\beta}),
\end{equation}
where we have
\begin{equation} \label{eq:3ptInt}
	C^{0(1,1,2)}_{ab\gamma} = \int_{\widetilde X} \cO^{(1)}_a \wedge \cO^{(1)}_b \wedge \cO^{(2)}_{\gamma}, 
\end{equation}
and 
\begin{equation}
\textrm{Li}_k(q) = \sum_{d=1}^{\infty} \frac{q^d}{d^k}.
\end{equation}

\subsection{$B$-side of the Mirror Symmetry}
Let us now come to the B model. Here the operators are elements of the horizontal subspace of the cohomology of the mirror Calabi-Yau variety $X$. 
Contrary to the three-fold case variations of the $(4,0)$ form $\Omega$ in the fourfold case do not span the full cohomology 
$H^4(X)$, but rather a subspace known as the horizontal subspace $H_H^4( X)$. 
By definition it is perpendicular to $H_V^4(X)$. It has the Hodge decomposition 
\begin{equation}
	H_H^4(X) = H^{4,0} \oplus H^{3,1} \oplus H^{2,2}_H \oplus H^{1,3} \oplus H^{0,4},
\end{equation}
where $H_H^{2,2}$ is the subspace of $H^{2,2}$ generated solely from the second variation of $\Omega$ with respect to the complex structure of $X$. Periods are then defined in terms of a basis $\gamma_a^{(i)}$ of $H^H_{4}(X)$ as follows
\begin{equation}
	\Pi^{(i) a} = \int_{\gamma_a^{(i)}} \Omega, \quad i=0,\ldots,4,
\end{equation}
where the cycles $\gamma^{(i)}_a$ are chosen such that they are dual to a basis $\hat \gamma_a^{(i)}$ of $H^{4-i,i}(X)$ with pairing
\begin{equation}
	\int_{\gamma_a^{(i)}} \hat \gamma_a^{(i)} = \delta^{ij} \delta_{ab}.
\end{equation}
Their $z$-expansion is of the form
\begin{eqnarray}
	\Pi^{(0)} & = &  1 + c_a z_a + \cO(\underline{z}^2), \nonumber \\
	\Pi^{(1) a} & = &  d_a(\underline{z}) + \log(z_a) \Pi^{(0)}(\underline{z}), \nonumber \\
	\Pi^{(2) \gamma} & = & \frac{1}{2} \sum_{a,b=1}^{h^{1,1}(X)} C^{0(1,1,2)}_{a b \gamma} 
	\left(d_a(\underline{z}) \log(z_b)  + d_b(\underline{z}) \log(z_a) + \Pi^{(0)}(\underline{z}) \log(z_a)\log(z_b)\right). \nonumber \\
	~ & ~ & + ~d^{\gamma}_{h^{1,1}(X)+1}, \label{eq:Bmodelperiods}
\end{eqnarray}
where the $d_a$ are polynomials of the form
\begin{eqnarray}
	d_1 & = & d_{1,a}^1 z_a + d^1_{2,a,b} z_a z_b + \cO(\underline{z}^3), \nonumber \\
	\vdots & ~ & ~ \nonumber \\
	d_{h^{1,1}(X)} & = & d_{1,a}^{h^{1,1}(X)} z_a + d_{2,a,b}^{h^{1,1}(X)} z_a z_b + \cO(\underline{z}^3), \nonumber \\
	d^{\gamma}_{h^{1,1}(X)+1} & = & 1 + \cO(\underline{z}). 
\end{eqnarray}
Furthermore, $C^{0(1,1,2)}_{a b \gamma}$ are constants defined in (\ref{eq:3ptInt}). Note that in section \ref{solutionssection} we have adopted the notation
$$
\Pi^{(0)}=\Pi^0,\ \Pi^{1(a)}=\Pi^a. 
$$
In order to introduce the prepotential (\ref{prepotential}) on the B model side we then have to use the following identities
\begin{equation} \label{mirrormap}
	F^0(\gamma) =\frac{\Pi^{(2) \gamma}}{\Pi^{(0)}}, \quad \tau_a = \frac{\Pi^{(1) a}}{\Pi^{(0)}}, ~a=1, \ldots, h^{1,1}(\widetilde X).
\end{equation}
This justifies the Ans\"atze (\ref{eq:Bmodelperiods}) for the B model periods.

\subsection{Yukawa couplings from Picard-Fuchs equations}
\label{sec:yuk}
Yukawa-couplings are defined through the holomorphic $(4,0)$-form $\Omega$ as follows:
\begin{equation} \label{eq:yukawa}
	C^{(1,1,1,1)}_{abcd} = \int_{X_z} \Omega \wedge \partial_a \partial_b \partial_c \partial_d \Omega,
\end{equation}
where $a,b,c,d \in \{1,\cdots,h^{3,1}(X)\}$ are complex structure moduli. 
We will utilize Griffirth transversality and the Picard-Fuchs equation to compute these four-point functions. 
Griffith transversality amounts to the following constraints:
\begin{eqnarray}
\label{eq:griffiths}
\int_{X_z} \Omega \wedge \partial_1^{i_1} \partial_2^{i_2}\cdots \partial_{h^{3,1}}^{i_{h^{3,1}}} \Omega & = & 0, \ \ \ \ 
i_1 + \cdots +i_{h^{3,1}} < 4
\end{eqnarray}
We will now present a formalism to compute four-point functions. In order to proceed we restrict our attention to 
Calabi-Yau fourfolds with a maximal number of $3$ complex structure moduli and define the functions
\begin{equation}
	W^{(i,j,k)} = \int_{X_z} \Omega \partial_1^i \partial_2^j \partial_3^k \Omega.
\end{equation}
Note that (\ref{eq:griffiths}) is equivalent to
\begin{eqnarray}
	W^{(i,j,k)} & = & 0  \quad \textrm{for}~ i+j+k<4, \nonumber \\
	W^{(i,j,k)} & = & C^{(1,1,1,1)}_{\underbrace{1\cdots 1}_{\textrm{$i$ times}}\underbrace{2\cdots 2}_{\textrm{$j$ times}}\underbrace{3\cdots 3}_{\textrm{$k$ times}}} ~\textrm{for}~ i+j+k = 4. \nonumber \\
\end{eqnarray}
Moreover, we arrive at further constraints by rewriting 
\begin{equation}
	\prod_{m=1}^3 \partial_m^{i_m} \int \Omega \wedge \prod_{m=1}^3 \partial_m^{j_m} \Omega = 0 \quad \textrm{for} ~ \sum_m i_m + j_m =5 \textrm{~and~} \sum_m j_m < 4,
\end{equation}
as first order differential equations in the four-point functions:
\begin{eqnarray}
	W^{(4,1,0)} & = & \frac{1}{2}(\partial_2 W^{(4,0,0)} + 4 \partial_1 W^{(3,1,0)}), \nonumber \\
	W^{(5,0,0)} & = & \frac{5}{2} \partial_1 W^{(4,0,0)}, \nonumber \\
	W^{(3,2,0)} &= & \frac{1}{2}(2 \partial_2 W^{(3,1,0)} + 3 \partial_1 W^{(2,2,0)}), \nonumber \\
	W^{(2,2,1)} & = & \frac{1}{2}(\partial_3 W^{(2,2,0)} + 2 \partial_2 W^{(2,1,1)} + 2 \partial_1 W^{(1,2,1)}), \nonumber \\
	W^{(3,1,1)} & = & \frac{1}{2}(\partial_3 W^{(3,1,0)} + \partial_2 W^{(3,0,1)} + 3 \partial_1 W^{(2,1,1)}),\nonumber \\ \label{eq:yukrel}
\end{eqnarray}
and all permutations of these.  
For the  differential operator $L_k= \sum_{\textbf{j}} f_k^{(\textbf{j})} \partial^{\textbf{j}}$ which annihilates $\Omega$ we 
have also
\begin{equation}
	\sum_{\textbf{j}} f_k^{{\textbf{j}}} W^{(\textbf{j})} = 0.
\end{equation}
This is obtained after taking the wedge product of the original equation with $\Omega$ and then integrating it over $X$. These equations can be supplemented further by applying more derivatives on the Picard-Fuchs operators so that one obtains algebraic equations relating the four-point functions. Acting with yet another derivative and using (\ref{eq:yukrel}) it is possible to obtain first order differential equations for the four-point functions which together with the algebraic constraints are enough to fix those up to a constant. The constant can then be fixed in terms of the classical intersection numbers of the mirror geometry as follows. Consider transforming the Yukawa-coupling to the mirror 
coordinates $\underline{\tau}$:
\begin{eqnarray}
	C^{(1,1,1,1)}_{abcd}(\underline{\tau}) & = & \sum_{e,f,g,h} 
	\frac{1}{\left(\Pi^{(0)}\right)^2} C^{(1,1,1,1)}_{efgh}(\underline{z}) 
	\frac{\partial z_e(\tau_i)}{\partial \tau_a} \frac{\partial z_f(\tau_i)}{\partial \tau_b}\frac{\partial z_g(\tau_i)}{\partial \tau_c}
	 \frac{\partial z_h(\tau_i)}{\partial \tau_d}  \nonumber \\
	~ & = & C^{0(1,1,1,1)}_{abcd} + \mathcal{O}(\tau_i), \label{eq:yukdef}
\end{eqnarray}
where the $C^{0(1,1,1,1)}_{abcd}$ are the classical intersection numbers of the mirror Calabi-Yau manifold. 
Using \eqref{icerm-1} and \eqref{icerm-2},  we can see that the Yukawa-couplings 
$C^{(1,1,1,1)}_{abcd}(\underline{\tau})$ are elements of the ring $\CYM_\en$. These couplings are related to the three-point functions through the identities:
\begin{equation} \label{eq:3pt4ptRel}
	C^{(1,1,1,1)}_{abcd}(\underline{\tau}) = C^{(1,1,2)}_{ab\gamma}(\underline{\tau}) \left({\eta^{(2)}}^{-1}\right)^{\gamma \delta}C^{(2,1,1)}_{\delta cd}(\underline{\tau}) .
\end{equation}

In the next section we will provide explicit examples for a particular family of Calabi-Yau fourfolds.

\section{Main example}
In this section we focus on the particular example of an elliptic fibration over $\mathbb{P}^3$ as also studied 
in \cite{Klemm:2007in}.

\subsection{Toric data}
The Mori cone vectors are given by
\begin{eqnarray}
	l^{(1)} & = &  (-6,0,0,0,0,2,3,1) \nonumber \\
	l^{(2)} & = & (0,1,1,1,1,0,0,-4). 
\end{eqnarray}
From these we deduce the Picard-Fuchs operators \eqref{eq:pfoperator1} and \eqref{eq:pfoperator2} 
with $\en=4, a_0=432, a_1=\frac{1}{6}, \ a_2=\frac{5}{6}$. 
In this example $H^{1,1}(\widetilde X)$ is generated by two elements $J_1$ and $J_2$ which are Poincar\'e dual 
to $D_1$ and $D_2$ introduced in the Introduction. We take the following linearly independent 
elements of $H^{2,2}_V$:
$$
\gamma_1:=J_2^2,\ \ \gamma_2:=\frac{1}{17}(4 J_1^2 + J_1 J_2)
$$
(In \cite{Klemm:2007in} we have also the notation $D_1=E$ and $D_2=B$, $E$ standing for the elliptic fibre and
$B$ standing for base).
The $A$-model notation for these objects that we used in \S\ref{A-model} is  $\gamma_i:= \cO^{(2)}_i,\ \ i=1,2$. 
The inverse of the intersection matrix
in this basis is 
$$
[\gamma_i\cdot\gamma_j]=(\eta^{(2)})^{-1}=
\begin{pmatrix}
 -4 & 1\\
 1 & 0
\end{pmatrix}
$$
Furthermore, we have
$$
\int_{\widetilde X}J_1^4=64,\ \ \int_{\widetilde X}J_1^3 J_2=16, \ \ \int_{\widetilde X}J_1^2 J_2^2=4, \ \ \int_{\widetilde X}J_1 J_2^3=1
$$
All other integrations of combniations of $J_i$'s over $\widetilde X$ are zero, see \cite{Klemm:2007in} for the details
of this computation. The BPS numbers for this particular Calabi-Yau manifold can be found in \cite{Klemm:2007in} and we also include them in the appendix of this paper. 

\subsection{Period expansions}
Next, we want to use the results of Proposition 1 - 8 to express the periods and Yukawa-couplings in terms of 
$SL(2,\mathbb{Z})$ modular forms.  In this case we have
\begin{equation} \label{eq:modrep}
	F(z) \rightarrow F(z(\tau)) = \left(E_4\right)^{\frac{1}{4}}, \quad z \rightarrow z(\tau) = \frac{1}{864} (1-\sqrt{1-1728/J}),
\end{equation}
$$
\theta F(z)\rightarrow \theta F(z(\tau)) = \frac{E_4^{1/4}(E_2 E_4-E_6)}{6(E_4^\frac{3}{2} + E_6)}.
$$
As a first step we solve for the constants of Proposition 4, we find:
\begin{eqnarray}
	c^0_0 = c^1_0 & = & 1 \nonumber \\
	c^0_1 = c^1_1 & = & 1 \nonumber \\
	c^0_2 = c^1_2 & = & \frac{1}{16} \nonumber \\
	c^0_3 = c^1_3 & = & \frac{1}{1296} \nonumber \\
	c^0_4 = c^1_4 & = & \frac{1}{331776} \nonumber \\
	c^0_5 = c^1_5 & = & \frac{1}{207360000} 
\end{eqnarray}
Regarding the constants $\tilde c^1_i$ we find that all of these are zero. Next, we compute the logarithmic periods and find that the quantities $A_i$ in Proposition 5 are given by:
\begin{eqnarray}
	A_0 & = & 0, \nonumber \\
	A_1 & = & -\frac{6(1-1688 z+ 1067904 z^2 - 307 556 352 z^3)}{(1-432 z)^4 F(z)}, \nonumber \\
	~     & \vdots & 
\end{eqnarray}
In particular, all $A_i \in \mathbb{Q}(z,F,\theta F)$ and have the form 
\begin{equation}
	A_i = \frac{P_i(z)}{(1-432 z)^{4 i} F(z)},
\end{equation}
where $P_i(z)$ are polynomials in $z$. For the $B_i$ which appear in Proposition 6 we find
\begin{equation}
	B_i = \frac{Q_i(z,F,\theta F)}{(1-432 z)^{4 i}F(z)},
\end{equation}
with polynomials $Q_i$. For example, we have
\begin{eqnarray}
	B_1(z) & = & \frac{1}{F(z) (1-432 z)^4} \times 4 \left(3 - 5064 z + 3203712 z^2 - 922669056 z^3 \right. \nonumber \\
	~          & ~ & + 3 F(z)^2 (1-1708 z + 1075344 z^2 - 291589632 z^3+62983360512 z^4) \nonumber \\
	~          & ~ & -\left. 5 F(z) \theta F(z) (1-2184 z + 1907712 z^2-828610560 z^3 + 143183904768 z^4)\right).
\end{eqnarray}
Regarding Proposition 7, we have 
\begin{equation}
	C_i(z) = \frac{R_i(z,F,\theta F)}{(1-432 z)^{4 i} F}.
\end{equation}
The $R_i$ are polynomials, the first of which is given by:
\begin{eqnarray}
	R_1(z) & = & 4 (3 F(z) (1-1708 z + 1075344 z^2 - 2915896332 z^3 + 62983360512 z^4) \nonumber \\
	~         &  ~ & - 5 \theta F (1-2184 z + 1907712 z^2 - 828610560 z^3 + 143183904768 z^4)).
\end{eqnarray}
In order to be able to apply the derivation defined in Proposition 8 we further need to compute 
\begin{eqnarray}
	\left.\frac{\left(\Pi^0\right)^2 W^{21}}{z_2 (W^{11} W^{22} - W^{21} W^{12})}\right|_{z_2=0} & = & -\frac{6(-1+1688 z - 1067904 z^2 + 3075563 z^3)}{(-1+432 z)^3}  \\
	\left.\frac{\left(\Pi^0\right)^2 W^{11}}{(W^{11} W^{22} - W^{21} W^{12})}\right|_{z_2=0} & = & 1.
\end{eqnarray}

\subsection{Yukawa couplings and modularity}

Using the above results together with Propositions 1 - 8 we can now express all 4-point functions defined in (\ref{eq:yukdef}) in terms of modular forms. In order to proceed we first write down the Yukawa-couplings on the B-model side as rational functions in the complex structure moduli:

\begin{eqnarray}
	W^{(4,0)} & = & -\frac{64}{z(1)^4 \Delta_1 } \nonumber \\
	W^{(3,1)} & = & \frac{16 (-1 + 432 z_1)}{z_1^3 z_2 \Delta_1} \nonumber \\
	W^{(2,2)} & = & -\frac{4 (1-432 z_1)^2}{z_1^2 z_2^2 \Delta_1} \nonumber \\
	W^{(1,3)} & = & \frac{(-1 + 432 z_1)^3}{z_1 z_2^2 \Delta_1} \nonumber \\
	W^{(0,4)} & = & \frac{64\left(-1 + 1728 z_1 - 1119744 z_1^2 + 322486272 z_1^3 \right)}{z_2^3 \Delta_1 \Delta_2}~, 
\end{eqnarray}
where $\Delta_1$, $\Delta_2$ are given by
\begin{eqnarray}
	\Delta_1 & = & -1 + 1728 z_1 - 1119744 z_1^2 + 322486272 z_1^3 + 34828517376 z_1^4 (-1 + 256 z_2), \nonumber \\
	\Delta_2 & = & -1 + 256 z_2.
\end{eqnarray}
We now want to compute $C^{(1,1,1,1)}_{abcd}(\underline{\tau})$ as an expansion in $q_2  = e^{-\tau_2}$ . Applying the derivation of Proposition 8 to (\ref{eq:yukdef}) we find after using (\ref{eq:modrep}):
\begin{eqnarray}
	C^{(1,1,1,1)}_{2222} & = & - q_2 \left(\frac{q_1^2}{\eta^{48}}\right) \left[\frac{5}{9} E_4 E_6(35 E_4^3+37 E_6^2)\right] \nonumber \\
	~ & ~ & - q_2^2 \left(\frac{q_1^4}{\eta^{96}}\right) \Big[\frac{5}{124416}E_4 E_6 (12377569 E_4^9 + 1960000 E_2 E_4^7 E_6 \nonumber \\
	~ & ~ & + 85433141 E_4^6 E_6^2 + 4144000 E_2 E_4^4 E_6^3 + 86392307 E_4^3 E_6^4 + 2190400 E_2 E_4 E_6^5 \nonumber \\
	~ & ~ & +11544823 E_6^6)\Big] + \mathcal{O}(q_2^3). \label{eq:yuk2222}
\end{eqnarray}
Notice that $C^{(1,1,1,1)}_{2222}$ is of modular weight $-2$\footnote{We assign weight $0$ to the combination $q_2 q_1^2$.} and if we define 
\begin{equation}
	Y^{(1)} = -\left(\frac{q_1^2}{\eta^{48}}\right) \frac{5}{9}E_4 E_6 (35 E_4^3+37 E_6^2),
\end{equation}
then equation (\ref{eq:yuk2222}) can be written as 
\begin{eqnarray}
	C^{(1,1,1,1)}_{2222} & = & q_2  Y^{(1)} - q_2^2 \Big[ \frac{5}{24} E_2 \left(Y^{(1)}\right)^2  \nonumber \\
	& ~ & + \left(\frac{q_1^4}{\eta^{96}}\right) \frac{5}{124416}E_4 E_6 (12377569 E_4^9+85433141 E_4^6 E_6^2+86392307 E_4^3 E_6^4 \nonumber \\
	~ & ~ & + 11544823 E_6^6)\Big] + ~\mathcal{O}(q_2^3)  \label{eq:yuk2222b}
\end{eqnarray}
This structure is reminiscent to the ``holomorphic anomaly" observed in \cite{Hosono:1999qc} in the case of elliptic Calabi-Yau threefolds and it would be very interesting to explore the significance of such an anomaly equation for the case of elliptic Calabi-Yau fourfolds further. In this paper we find evidence for such an anomaly structure also for the Gromov-Witten potential $F^0(\gamma_1)$ which we derive in the following.
Using the identity
\begin{equation}  \label{eq:YukToF}
	C^{(1,1,1,1)}_{2222} =-4C^{(1,1,2)}_{22\gamma_1}C^{(1,1,2)}_{22 \gamma_1}+
	C^{(1,1,2)}_{22\gamma_2}C^{(1,1,2)}_{22 \gamma_1}+C^{(1,1,2)}_{22\gamma_1}C^{(1,1,2)}_{22 \gamma_2}
\end{equation}
and the expansions
\begin{equation}
	C^{(1,1,2)}_{22\gamma_1} = 0 + \mathcal{O}(q_2), \quad C^{(1,1,2)}_{22\gamma_2} = 1 + \mathcal{O}(q_2),
\end{equation}
we derive
\begin{equation}
	F^0(\gamma_1)= - q_2 \left(\frac{q_1^2}{\eta^{48}}\right) \left[\frac{5}{18} E_4 E_6(35 E_4^3+37 E_6^2)\right] + \mathcal{O}(q_2^2).
\end{equation}
We observe that $F^0(\gamma_1)$ has modular weight $-2$. In order to derive the second order term $q_2$ we now impose an anomaly structure of the form
\begin{equation}
	F^0(\gamma_1) = q_2 \frac{1}{2} Y^{(1)} + q_2^2 \left[\left(\frac{q_1^4}{\eta^{96}}\right)P_{46}(E_4,E_6) + k E_2 \left(\frac{1}{2}Y^{(1)}\right)^2\right] + \mathcal{O}(q_2^3) ,
\end{equation}
where $P_{46}(E_4,E_6)$ is a polynomial of weight $46$ in $E_4$ and $E_6$ and $k$ is a constant. We find
\begin{eqnarray}
	k & = & -\frac{1}{12}, \nonumber \\
	P_{46}(E_4,E_6) & = & - \frac{5}{2985984} E_4 E_6 (29908007 E_4^9 + 207234483 E_4^6 E_6^2 + 208392741 E_4^3 E_6^4 \nonumber \\
	~ & ~ &  + 27245569 E_6^6). 
\end{eqnarray}
Using these results together with the identity (\ref{eq:YukToF}) we can solve for $F(\gamma_2)$ to first order in $q_2$:
\begin{eqnarray}
F^0(\gamma_2) & = & 1 + q_2 \left(\frac{q_1^2}{\eta^{48}}\right)\left[\frac{5}{10368}(10321 E_4^6 + 1680 (-24+E_2)E_4^4 E_6\right. \\
	~ & ~ &  + 59182 E_4^3 E_6^2 + 1776 (-24 + E_2)E_4 E_6^3 + 9985 E_6^4)\Big] + \mathcal{O}(q_2^2) \nonumber 
\end{eqnarray}
Note that $F^0(\gamma_2)$ is not a modular form of a definite weight but rather consits of pieces with weights $-2$ and $0$.

\appendix

\section{Table of BPS numbers for the main example}
\label{sec:invariants}
\begin{table}[here!]
\centering
\footnotesize{\begin{tabular} {|c|c|c|c|c|c|} 
\hline 
$d_1 \backslash d_2$ & 0 & 1 & 2 & 3 & 4 \\ \hline
0 & 0 & -20 & -820 & -68060 & -7486440 \\
 1 & 0 & 7680 & 491520 & 56256000 & 7943424000 \\
2 &  0 & -1800000 & -159801600 & -24602371200 & -4394584496640 \\
3 &  0 & 278394880 & 35703398400 & 7380433205760 & 1662353371955200 \\
4 &  0 & 623056099920 & -6039828417600 & -1683081588149760 & -478655396625235200 \\
5 &  0 & 97531011394560 & 2356890607411200 & 388243145737128960 & 119544387620870983680 \\ \hline
\end{tabular}\caption{$n^0_{d_1,d_2}(\gamma_1)$}} 
\end{table}

\begin{table}[here!]
\centering
\footnotesize{\begin{tabular} {|c|c|c|c|c|c|} 
\hline 
$d_1 \backslash d_2$ & 0 & 1 & 2 & 3 & 4 \\ \hline
0 & 0 & 0 & 0 & 0 & 0 \\
1 &  960 & 5760 & 181440 & 13791360 & 1458000000 \\
2 &  1920 & -1817280 & -98640000 & -10715760000 & -1476352644480 \\
3 &  2880 & 421685760 & 29972448000 & 4447212981120 & 783432258136320 \\
4 &  3840 & 2555202430080 & -6353500619520 & -1273702762398720 & -285239128072550400 \\
5 &  4800 & 506461104057600 & 4042353816604800 & 373520266906348800 & 86478430090747622400 \\
\hline
\end{tabular}\caption{$n^0_{d_1,d_2}(\gamma_2)$}} 
\end{table}

\section{More on Mirror Symmetry}
In this section we want to elaborate on details of period constructions on the B-side of the Mirror Symmetry and construct a more complete map between A-model and B-model quantities. We refer to the original references for a more thorough review.

We start by recalling that we can choose a dual basis $\hat \gamma_a^{(i)}$ of $H^{4-i,i}_H(X)$ (where $i=0,\ldots,4$ is the grading) with pairing
\begin{equation}
	\int_{\gamma_a^{(i)}} \hat{\gamma}_{b}^{(j)} = \delta^{ij} \delta_{ab}.
\end{equation}
The holomorphic four-form is then expanded as $\Omega=\sum_i \Pi^{(i) a} \hat{\gamma}_a^{(i)}$. Denoting the complex structure moduli space of $X$ by $\cM$ we find that for $z \in \cM$ the horizontal parts of $F^k = \oplus_{p=0}^k H^{4-p,p}(X_z)$ form holomorphic vector bundles for which one can introduce frames $\beta_a^{(k)}$ with the basis expansion
\begin{equation}
	\beta_a^{(k)} = \hat{\gamma}_a^{(k)} + \sum_{p>k} \Pi^{(p,k)~c}_a(z) \hat{\gamma}_c^{(p)}.
\end{equation} 
These $\beta_a^{(k)}$ are the basic operators of the B model and under mirror symmetry we have the exchange
\begin{equation}
	\cO^{(k)}_a \mapsto \left.\beta_a^{(k)}\right|_{z=0}.
\end{equation}
The depence of the $\Pi_a^{(p,k)}$ on $z$ is captured by the Picard-Fuchs operators $L_a(\underline{\theta},\underline{z})$. These are written in terms of the logarithmic derivatives $\theta_a = z_a \frac{\partial}{\partial z_a}$ with respect to the canonical complex variables $z_a$ defined at the large complex structure point. Define the formal limits
\begin{equation}
	L_i^{\textrm{lim}}(\underline{\theta}) = \textrm{lim}_{z_i \rightarrow 0} L_i(\underline{\theta},\underline{z}), i=1,\ldots,r,
\end{equation}
and consider the algebraic ring
\begin{equation}
	\cR = \mathbb{C}[\underline{\theta}]/(\cJ = \left\{L_1^{\textrm{lim}},\ldots,L_r^{\textrm{lim}}\right\}).
\end{equation}
One can define a grading for this ring by taking the ring at grade $k$, $\cR^{(k)}$ to be generated by a basis of degree $k$ polynomials whose number is given by $h^H_{4-k,k}(X)=h^V_{k,k}(\tilde X)$ for $k=0,\ldots,4$. There is a one-to-one map between the ring $\cR^{(k)}$  and solutions of the Picard-Fuchs equations at large radius.  A given ring element of the form $\cR^{(k) a} = \sum_{|\underline{\alpha}|=k} \frac{1}{(2\pi i)^k} m^a_{\underline{\alpha}} \theta^{\alpha_1}_{1} \cdots \theta^{\alpha_h}_h$ is mapped to a solution of the form
\begin{equation}
	\tilde{\Pi}^{(k) a} = X_0(\underline{z})\left[\mathbb{L}^{(k) a} + \cO(\log(z)^{|\alpha|-1}\right],
\end{equation}
where
\begin{equation}
	\mathbb{L}^{(k)~a} = \sum_{|\underline{\alpha}|=k} \frac{1}{(2\pi i)^k} \tilde{m}^a_{\underline{\alpha}}\log^{\alpha_1}(z_1)\ldots\log^{\alpha_h}(z_h),
\end{equation}
and $\tilde m^a_{\alpha}(\prod_i \alpha_i!) = m^a_{\alpha}$. Using the metric (\ref{eq:metric}) to move indices down we furthermore demand
\begin{equation}
	\cR^{(k)}_a \mathbb{L}^{(k) b} = \delta^b_a.
\end{equation}
With these definitions mirror symmetry, i.e. exchange of A and B model, is triggered by the identifications
\begin{equation}
	\theta_i \leftrightarrow J_i, \quad \left.\beta_a^{(k)}\right|_{z=0} = \left.\cR^{(k)}_a \Omega\right|_{z=0}.
\end{equation}

\section{Other families of elliptically fibred Calabi-Yau varieties}
\label{21/10/2015}
On our path to reformulate our main results for the Calabi-Yau $n$-folds with the Picard-Fuchs system 
\eqref{eq:pfoperator1} and \eqref{eq:pfoperator2}, we studied also many other elliptically fibred Calabi-Yau 
varieties and computed the corresponding Picard-Fuchs systems. For future investigation we have collected our computations in the
table bellow. In this table
$\mathbb{F}_i$'s are Hirzebruch surfaces.  The limit Picard-Fuchs equation in the variable $z_1$ means that the limit
is taken with respect to all other variables except $z_1$. 


\begin{sidewaystable}[!htbp]
\centering
\begin{tabular}{|c|c|c|c|c|c|}
\hline
No. & CY & Base & Fibre & PF-system & $\lim_{z_i\rightarrow 0}$ PF-system\\ \hline

0
& \multirow{4}{*}{3-fold}  & \multirow{2}{*}{ $\mathbb{P}^2$ } & \multirow{2}{*}{Elliptic}&
 $ L_1=\theta_1(\theta_1-3\theta_2)-12z_1(6\theta_1+1)(6\theta_1+5)$ 
 &
\href{http://w3.impa.br/~hossein/WikiHossein/files/Singular%20Codes/2015-06-PFSystemAndLimits-EmanuelMurad.txt}
{$\mathcal{L}_{z_1=0} = \theta_2^3+3z_2\theta_2(3\theta_2+1)(3\theta_2+2)$}
 \\ 
& & & & 
  $ L_2=\theta_2^3+z_2(3\theta_2-\theta_1+0)(3\theta_2-\theta_1+1)(3\theta_2-\theta_1+2)$ &    
 
$\mathcal{L}_{z_2=0} = \theta_1^2-432z_1(\theta_1+\frac{1}{6})(\theta_1+\frac{5}{6})$
\\   \cline{3-6}

1
&  & \multirow{2}{*}{$\mathbb{P}^1$} & \multirow{2}{*}{K3 ($d=4$) }&
 ${L}_1 = \theta_1^2 (\theta_1 - 2 \theta_2) - 4 z_1 (4 \theta_1 + 3) (4 \theta_1 + 2) (4 \theta_1 + 1)$  &
\href{http://w3.impa.br/~hossein/WikiHossein/files/Singular%20Codes/2015-04-PFSystem-CY3CY4.txt}
{$ \theta_1^3  -4z_1(4\theta_1+1)(4\theta_1+2)(4\theta_1+3)$} \\ 
& & & & 
 ${L}_2 = \theta_2^2 - z_2 (2 \theta_2 - \theta_1 + 1) (2 \theta_2 - \theta_1)$    &  
\href{http://w3.impa.br/~hossein/WikiHossein/files/Singular%20Codes/2015-04-PFSystem-CY3CY4.txt}
{$\theta_2^2-2z_2\theta_2(2\theta_2+1)$}  
 \\   \hline
 2
& \multirow{17}{*}{4-fold} & \multirow{2}{*}{$\mathbb{P}^3$} & \multirow{2}{*}{Elliptic}  &
$L_1=\theta_1 (\theta_1-4\theta_2) - 12 z_1 (6 \theta_1+5)(6\theta_1+1)$  & 
$\theta_1^2  - 12 z_1 (6\theta_1+5)(6\theta_1+1)$ \\
 & & & & $L_2=\theta_2^4 - z_2 (4\theta_2-\theta_1)(4\theta_2-\theta_1+1)(4\theta_2-\theta_1+2)(4\theta_2-\theta_1+3)$ & 
$\theta_2^4-4z_2\theta_2(4\theta_2+1)(4\theta_2+2)(4\theta_2+3)$.
 \\ \cline{3-6}

3
&  & \multirow{2}{*}{$\mathbb{F}_0$} & \multirow{9}{*}{K3 ($d=4$)}  &
${L}_1=\theta_1^2(\theta_1-2\theta_2-2\theta_3)- 8 z_1 (1+2 \theta_1)(1+4\theta_1)(3+4\theta_1)$  & 
\href{http://w3.impa.br/%7Ehossein/WikiHossein/files/Singular%20Codes/2015-04-PFSystemHirzebruch.txt}
{$\theta_1^3-4z_1(4\theta_1+1)(4\theta_1+2)(4\theta_1+3)$} \\
 & & & & ${L}_{2,0}=\theta_2^2 - z_2 (-1+\theta_1-2\theta_2-2\theta_3)(\theta_1-2\theta_2-2\theta_3)$ & 
\href{http://w3.impa.br/%7Ehossein/WikiHossein/files/Singular%20Codes/2015-05-PFSystemHirzebruch-z1=z2=0.txt}
{$\theta_2^2-4z_2\theta_2(\theta_2+\frac{1}{2})$}
 \\

& & & & ${L}_{3}=\theta_3^2 - z_3 (-1+\theta_1-2\theta_2-2\theta_3)(\theta_1-2\theta_2-2\theta_3)$ & 
\href{http://w3.impa.br/%7Ehossein/WikiHossein/files/Singular%20Codes/2015-05-PFSystemHirzebruch-z1=z2=0.txt}
{$\theta_3^3-4z_3\theta_3(\theta_3+\frac{1}{2})$}
 \\ \cline{5-6}
4 & & \multirow{2}{*}{$\mathbb{F}_1$} & & ${L}_1=\theta_1^2(\theta_1-2\theta_2-2\theta_3)- 8 z_1 (1+2 \theta_1)(1+4\theta_1)(3+4\theta_1)$ &
\href{http://w3.impa.br/%7Ehossein/WikiHossein/files/Singular%20Codes/2015-04-PFSystemHirzebruch.txt}
{
$\theta_1^3-4z_1(4\theta_1+1)(4\theta_1+2)(4\theta_1+3)$
}
\\
& & & & ${L}_{2,1}=\theta_2^2+z_2(\theta_1-\theta_2-2\theta_3)(\theta_2-\theta_3)$ & 
\href{http://w3.impa.br/%7Ehossein/WikiHossein/files/Singular%20Codes/2015-05-PFSystemHirzebruch-z1=z2=0.txt}
{
$\theta_2^2-2z_2\theta_2^2$
}
\\  
& & & & ${L}_{3}=\theta_3^2 - z_3 (-1+\theta_1-2\theta_2-2\theta_3)(\theta_1-2\theta_2-2\theta_3)$ & 
\href{http://w3.impa.br/%7Ehossein/WikiHossein/files/Singular%20Codes/2015-05-PFSystemHirzebruch-z1=z2=0.txt}
{
$\theta_3^2-2z_3\theta_3(2\theta_3+1)$
}
 \\   \cline{5-6} 
5 & & \multirow{3}{*}{$\mathbb{F}_2$} &  &
${L}_1=\theta_1^2(\theta_1-2\theta_2-2\theta_3)- 8 z_1 (1+2 \theta_1)(1+4\theta_1)(3+4\theta_1)$   & 
\href{http://w3.impa.br/%7Ehossein/WikiHossein/files/Singular%20Codes/2015-04-PFSystemHirzebruch.txt}
{$\theta_1^3-4z_1(4\theta_1+1)(4\theta_1+2)(4\theta_1+3)$}
 \\
& & & & ${L}_{2,2}=\theta_2^2 - z_2 (2\theta_2-\theta_3)(1+2\theta_2 - \theta_3)$ & 
\href{http://w3.impa.br/%7Ehossein/WikiHossein/files/Singular%20Codes/2015-05-PFSystemHirzebruch-z1=z2=0.txt}
{$\theta_2^2-4z_2\theta_2(\theta_2+\frac{1}{2})$}
 \\ 
& & & & ${L}_3=\theta_3^2 - z_3 (-1+\theta_1-2\theta_2-2\theta_3)(\theta_1-2\theta_2-2\theta_3)$ & 
\href{http://w3.impa.br/%7Ehossein/WikiHossein/files/Singular%20Codes/2015-04-PFSystemHirzebruch.txt}
{
$\theta_3^2-2z_3\theta_3(2\theta_3+1)$
}
\\ \cline{3-6}
6
 & & \multirow{6}{*}{$\mathbb{P}^2$} & \multirow{2}{*}{K3 ($d=4$)} &
${L}_1 = \theta_1^2 (\theta_1 - 3\theta_2) -8 z_1 (1+2\theta_1)(1+4\theta_1)(3+4\theta_1) $  &  
 \href{http://w3.impa.br/%7Ehossein/WikiHossein/files/Singular%20Codes/2015-04-PFSystemAndLimits-z2=0.txt}{
$ \theta_1^3  -4z_1(4\theta_1+1)(4\theta_1+2)(4\theta_1+3)$}  \\ 
& & & & ${L}_2 = \theta_2^3 - z_2 (-2 + \theta_1 - 3 \theta_2)(-1+\theta_1-3\theta_2)(\theta_1-3\theta_2)$ & 
\href{http://w3.impa.br/%7Ehossein/WikiHossein/files/Singular%20Codes/2015-06-PFSystemAndLimits-z1=0.txt}
{$\theta_2^3+3z_2\theta_2(3\theta_2+1)(3\theta_2+2)$}
 \\ \cline{4-6}
7 & & & \multirow{2}{*}{K3 ($d=6$)} &
$ L_1=\theta_1^2(\theta_1 - 3 \theta_2) + 6 z_1 (1+2 \theta_1)(1+3 \theta_1)(2+3\theta_1)$ &      
$\theta_1^3+6z_1(3\theta_1+1)(3\theta_1+2)(2\theta_1+1)$        \\ 
& & & & $ L_2=\theta_2^3 - z_2 (-2+\theta_1 - 3 \theta_2)(-1+\theta_1-3\theta_2)(\theta_1-3\theta_2)$ & 
$\theta_2^3+3z_2\theta_2(3\theta_2+1)(3\theta_2+2)$
 \\  \cline{4-6}
8 & & & \multirow{2}{*}{K3 ($d=8$)} &
$ L_1=\theta_1^2(\theta_1 - 3\theta_2) - 8 z_1 (1+2\theta_1)^3$ &    $\theta_1^3-8z_1(2\theta_1+1)^3$\\ 
& & & & 
$ L_2=\theta_2^3 - z_2 (-2 + \theta_1 - 3\theta_2)(-1+\theta_1 - 3 \theta_2)(\theta_1 - 3\theta_2)$ & 
$\theta_2^3+3z_2\theta_2(3\theta_2+1)(3\theta_2+2)$
 \\ \hline

\end{tabular}\caption{PF-system} 
\label{PF-system}
\end{sidewaystable}

\newcommand{\etalchar}[1]{$^{#1}$}
\def\cprime{$'$} \def\cprime{$'$} \def\cprime{$'$} \def\cprime{$'$}


\begin{thebibliography}{{Mov}15c}

\bibitem[AMSY14]{HosseinMurad}
Murad Alim, Hossein Movasati, Emanuel Scheidegger, and Shing-Tung Yau.
\newblock Gauss-{M}anin connection in disguise: {C}alabi-{Y}au threefolds.
\newblock {\em arXiv:1410.1889 [math.AG], 25 pages}, 2014.

\bibitem[AS12]{Alim:2012ss}
Murad Alim and Emanuel Scheidegger.
\newblock {Topological Strings on Elliptic Fibrations}.
\newblock 2012.

\bibitem[CFKM94]{Candelas:1994hw}
Philip Candelas, Anamaria Font, Sheldon~H. Katz, and David~R. Morrison.
\newblock {Mirror symmetry for two parameter models. 2.}
\newblock {\em Nucl. Phys.}, B429:626--674, 1994.

\bibitem[CHS15]{Cai:2014vka}
Wenhe Cai, Min-xin Huang, and Kaiwen Sun.
\newblock {On the Elliptic Genus of Three E-strings and Heterotic Strings}.
\newblock {\em JHEP}, 01:079, 2015.

\bibitem[CKK14]{Choi:2012jz}
Jinwon Choi, Sheldon Katz, and Albrecht Klemm.
\newblock {The refined BPS index from stable pair invariants}.
\newblock {\em Commun. Math. Phys.}, 328:903--954, 2014.

\bibitem[DGMS13]{hokh2}
C.~Doran, T.~Gannon, H.~Movasati, and K.~Shokri.
\newblock Automorphic forms for triangle groups.
\newblock {\em Communications in Number Theory and Physics}, 7(4):689--737,
  2013.

\bibitem[GHK{\etalchar{+}}15]{Gadde:2015tra}
Abhijit Gadde, Babak Haghighat, Joonho Kim, Seok Kim, Guglielmo Lockhart, and
  Cumrun Vafa.
\newblock {6d String Chains}.
\newblock 2015.

\bibitem[GHKK10]{Grimm:2009ef}
Thomas~W. Grimm, Tae-Won Ha, Albrecht Klemm, and Denis Klevers.
\newblock {Computing Brane and Flux Superpotentials in F-theory
  Compactifications}.
\newblock {\em JHEP}, 1004:015, 2010.

\bibitem[GMP95]{Greene:1993vm}
Brian~R. Greene, David~R. Morrison, and M.~Ronen Plesser.
\newblock {Mirror manifolds in higher dimension}.
\newblock {\em Commun.Math.Phys.}, 173:559--598, 1995.

\bibitem[GPS01]{GPS01}
G.-M. Greuel, G.~Pfister, and H.~Sch\"onemann.
\newblock {\sc Singular} 2.0.
\newblock {A Computer Algebra System for Polynomial Computations}, Centre for
  Computer Algebra, University of Kaiserslautern, 2001.
\newblock {\tt http://www.singular.uni-kl.de}.

\bibitem[HIK{\etalchar{+}}15]{Haghighat:2013gba}
Babak Haghighat, Amer Iqbal, Can Kozçaz, Guglielmo Lockhart, and Cumrun Vafa.
\newblock {M-Strings}.
\newblock {\em Commun. Math. Phys.}, 334(2):779--842, 2015.

\bibitem[HKK15]{Huang:2015sta}
Min-xin Huang, Sheldon Katz, and Albrecht Klemm.
\newblock {Topological String on elliptic CY 3-folds and the ring of Jacobi
  forms}.
\newblock 2015.

\bibitem[HKLV14]{Haghighat:2013tka}
Babak Haghighat, Can Kozcaz, Guglielmo Lockhart, and Cumrun Vafa.
\newblock {Orbifolds of M-strings}.
\newblock {\em Phys. Rev.}, D89(4):046003, 2014.

\bibitem[HKLV15]{Haghighat:2014vxa}
Babak Haghighat, Albrecht Klemm, Guglielmo Lockhart, and Cumrun Vafa.
\newblock {Strings of Minimal 6d SCFTs}.
\newblock {\em Fortsch. Phys.}, 63:294--322, 2015.

\bibitem[HKTY95]{Hosono:1993qy}
S.~Hosono, A.~Klemm, S.~Theisen, and Shing-Tung Yau.
\newblock {Mirror symmetry, mirror map and applications to Calabi-Yau
  hypersurfaces}.
\newblock {\em Commun. Math. Phys.}, 167:301--350, 1995.

\bibitem[HLV14]{Haghighat:2014pva}
Babak Haghighat, Guglielmo Lockhart, and Cumrun Vafa.
\newblock {Fusing E-strings to heterotic strings: E+E→H}.
\newblock {\em Phys. Rev.}, D90(12):126012, 2014.

\bibitem[HMVV15]{Haghighat:2015ega}
Babak Haghighat, Sameer Murthy, Cumrun Vafa, and Stefan Vandoren.
\newblock {F-Theory, Spinning Black Holes and Multi-string Branches}.
\newblock 2015.

\bibitem[HST99]{Hosono:1999qc}
S.~Hosono, M.~H. Saito, and A.~Takahashi.
\newblock {Holomorphic anomaly equation and BPS state counting of rational
  elliptic surface}.
\newblock {\em Adv. Theor. Math. Phys.}, 3:177--208, 1999.

\bibitem[KKL{\etalchar{+}}14]{Kim:2014dza}
Joonho Kim, Seok Kim, Kimyeong Lee, Jaemo Park, and Cumrun Vafa.
\newblock {Elliptic Genus of E-strings}.
\newblock 2014.

\bibitem[KKL15]{Kim:2015fxa}
Joonho Kim, Seok Kim, and Kimyeong Lee.
\newblock {Higgsing towards E-strings}.
\newblock 2015.

\bibitem[KLRY98]{Klemm:1996ts}
A.~Klemm, B.~Lian, S.~S. Roan, and Shing-Tung Yau.
\newblock {Calabi-Yau fourfolds for M theory and F theory compactifications}.
\newblock {\em Nucl. Phys.}, B518:515--574, 1998.

\bibitem[KMW12]{Klemm:2012sx}
Albrecht Klemm, Jan Manschot, and Thomas Wotschke.
\newblock {Quantum geometry of elliptic Calabi-Yau manifolds}.
\newblock 2012.

\bibitem[KP08]{Klemm:2007in}
A.~Klemm and R.~Pandharipande.
\newblock {Enumerative geometry of Calabi-Yau 4-folds}.
\newblock {\em Commun. Math. Phys.}, 281:621--653, 2008.

\bibitem[May97]{Mayr:1996sh}
P.~Mayr.
\newblock {Mirror symmetry, N=1 superpotentials and tensionless strings on
  Calabi-Yau four folds}.
\newblock {\em Nucl.Phys.}, B494:489--545, 1997.

\bibitem[Mov15a]{GMCD-MQCY3}
Hossein Movasati.
\newblock Gauss-{M}anin connection in disguise: {C}alabi-{Y}au modular forms.
\newblock {\em http://w3.impa.br/~hossein/myarticles/GMCD-MQCY3.pdf, 200
  pages}, 2015.

\bibitem[Mov15b]{ho22}
Hossein Movasati.
\newblock Modular-type functions attached to mirror quintic {C}alabi-{Y}au
  varieties.
\newblock {\em To appear in Math. Zeit.}, 2015.

\bibitem[{Mov}15c]{ho14-II}
Hossein {Movasati}.
\newblock {Quasi-modular forms attached to elliptic curves: Hecke operators.}
\newblock {\em {J. Number Theory}}, 157:424--441, 2015.

\bibitem[MS14]{hosseinkhosro}
Hossein Movasati and Khosro~M. Shokri.
\newblock Automorphic forms for triangle groups: Integrality properties.
\newblock {\em Journal of Number Theory}, 145:67--78, 2014.

\end{thebibliography}

\end{document}